\numberwithin{equation}{section}
\theoremstyle{definition}
\newtheorem{lemma}{Lemma}[section]
\newtheorem*{claim}{Claim}
\newtheorem{prop}[lemma]{Proposition}
\newtheorem{cor}[lemma]{Corollary}
\newtheorem{thm}[lemma]{Theorem}
\newtheorem{ex}[lemma]{Example}
\newtheorem{case}{Case}
\newcommand{\C}{\mathbb{C}}
\newcommand{\N}{\mathbb{N}}
\newcommand{\Z}{\mathbb{Z}}
\def\a{\alpha}
\def\im{\mathrm{Im}(\varphi_{\alpha\beta})}
\begin{document}
\title{Simple $\mathcal{W}\ltimes\widehat{H_4}$-modules from tensor products}

\author{Dashu Xu\footnote{E-mail: dox@mail.ustc.edu.cn}}

\date{}
\maketitle
\begin{abstract}
This paper investigates simple modules of the semi-direct product algebra $\mathcal{W}\ltimes\widehat{H_4}$, where $\mathcal{W}$ is the Witt algebra and $\widehat{H_4}$ is the loop Diamond algebra.
We first use simple modules over the Weyl algebra to construct a family of simple $\mathcal{W}\ltimes\widehat{H_4}$-modules.
Then, we classify simple $\mathcal{W}\ltimes\widehat{H_4}$-modules that are free $U(\C L_0\oplus\C a_0)$-modules of rank 1.
Finally, we give a necessary and sufficient condition for finitely many simple $U(\C L_0\oplus\C a_0)$-free modules to be simple, and then determine their isomorphism classes.

\bigskip
\noindent {\em Key words: Free module; Simple module; Tensor product; Weyl algebra}
\end{abstract}

\section{Introduction and Preliminaries}
Throughout this paper, we use $\N$, $\Z$, $\C$ and $\C^*$ to denote the sets of non-negative integers, integers, complex numbers and non-zero complex numbers respectively.
For an arbitrary Lie algebra $\mathfrak{g}$, we employ $U(\mathfrak{g})$ to denote its universal enveloping algebra.

Let $\C[t^{\pm1}]$ be the algebra of Laurent polynomials.
The Witt algebra is defined to be the derivation algebra of $\C[t^{\pm1}]$.
It has a $\C$-linear basis $\left\{t^{n+1}\partial_t\mid n\in\Z\right\}$ with the Lie bracket $$[t^{m+1}\partial_t,t^{n+1}\partial_t]=(n-m)t^{m+n+1}\partial_t.$$ Denote the Witt algebra by $\mathcal{W}$.

For an arbitrary Lie algebra $\mathfrak{g}$, we have the corresponding loop algebra, which is denoted by $\widehat{\mathfrak{g}}$. 
The underlying vector space of $\widehat{\mathfrak{g}}$ is $\mathfrak{g}\otimes\C[t^{\pm1}]$. 
For $x,y\in\mathfrak{g}$ and $m,n\in\Z$, the Lie bracket is defined as $$[x\otimes t^m, y\otimes t^n]=[x,y]\otimes t^{m+n}.$$
The Witt algebra acts on the loop algebra as derivations.
More precisely, there exists a Lie algebra homomorphism
\begin{align*}
	\varphi:\mathcal{W}&\to\mathrm{Der}(\widehat{\mathfrak{g}})\\
	t^{m+1}\partial_t&\mapsto \varphi(t^{m+1}\partial_t):\; x\otimes t^n\mapsto n(x\otimes t^{m+n}),
\end{align*}
where $x\in\mathfrak{g}$ and $m,n\in\Z$.
Hence, we have the semi-direct product $\mathcal{W}\ltimes\widehat{\mathfrak{g}}$.
In particular, if $\mathfrak{g}$ is a complex simple Lie algebra, there is a 2-dimensional central extension of $\mathcal{W}\ltimes\widehat{\mathfrak{g}}$, which is the {conformal current algebra} studied in \cite{K}.
Weight modules of $\mathcal{W}\ltimes\widehat{\mathfrak{g}}$ have been well understood due to the work of \cite{K,LPX,CLW}.
In the case of $\mathfrak{g}=\mathfrak{sl}_2$, some non-weight simple modules are constructed in \cite{CH,CY}.

This paper is a continuation of \cite{CX}.
We study representations of the algebra $\mathcal{W}\ltimes\widehat{H_4}$, where $\mathcal{W}$ is the Witt algebra and $H_4$ is the Diamond algebra.
The Diamond algebra is a 4-dimensional Lie algebra that is powerful in physics. 
A Wess-Zumino-Witten model is built upon this algebra\cite{NW}.
It is also relevant to Leibniz algebras\cite{UKO} and special functions\cite{M}.
The affinization of the Diamond algebra has been well-explored in \cite{BJP}.
Research has also been conducted on the vertex operator algebra associated with the Diamond algebra \cite{B}.

Specifically, the Diamond algebra $H_{4}$ has a $\C$-basis $\left\{a,b,c,d\right\}$ with the non-vanishing Lie brackets:
\[
[a,b]=c,\quad\quad [d,a]=a,\quad\quad [d,b]=-b.
\]
For $n\in\Z$ and $x\in H_4$, denote by $L_n=t^{n+1}\partial_t\in\mathcal{W}$ and $x_n=x\otimes t^n\in\widehat{H_4}$.
Then, the semi-direct product algebra $\mathcal{W}\ltimes\widehat{H_4}$ has a $\C$-linear basis $\left\{L_n,a_n,b_n,c_n,d_n\mid n\in\Z\right\}$, and the non-vanishing Lie brackets are as follows:
\begin{align*}
	[L_m,L_n]&=(n-m)L_{m+n},\quad\quad [L_m,x_n]=nx_{m+n},\quad\quad [a_m,b_n]=c_{m+n},\\
	[d_m,a_n]&=a_{m+n},\quad\quad\quad\quad\quad\quad [d_m,b_n]=-b_{m+n},
\end{align*}
where $x\in\left\{a,b,c,d\right\}$.
\textbf{In the rest of this paper}, we denote the algebra $\mathcal{W}\ltimes\widehat{H_4}$ by $L$.
Meanwhile, denote by $B=\C L_0\oplus\C a_0$ and $H=\C L_0\oplus\C d_0$, respectively.

In Section \ref{22}, we use simple modules of the Weyl algebra to construct a large family of simple $\mathcal{W}\ltimes\widehat{H_4}$-modules.
This construction is motivated by the Shen-Larsson functor\cite{S,L}.
Then, inspired by the work of \cite{N}, in Section \ref{33}, we classify simple $\mathcal{W}\ltimes\widehat{H_4}$-modules that are free $U(B)$-modules of rank 1.
Finally, in Section \ref{44}, we study the tensor products of finitely many $U(B)$-free simple modules. 
This work is built upon previous work in \cite{CY}.
A necessary and sufficient condition for such a tensor product to be simple is obtained and the isomorphism classes are determined.

\section{The simple module $\mathcal{F}_{\alpha\beta}(P,V)$}
\label{22}
Denote by $$\mathcal{R}_2=\C[x_0^{\pm1},x_1^{\pm1},\partial_{x_0},\partial_{x_1}]$$
the Weyl algebra of degree 2.
Then we have $\mathcal{R}_2=\mathcal{R}_0\otimes\mathcal{R}_1$, where for $i=0,1$, $\mathcal{R}_i=\C[x_i^{\pm1},\partial_{x_i}]$ is the Weyl algebra of degree 1.
For $i=0,1$, we simply use $\partial_i$ to denote $x_i\partial_{x_i}$.
Suppose $\mathfrak{b}=\C h\oplus\C e$ is a 2-dimensional Lie algebra with the Lie bracket $[h,e]=e$.

For $\alpha\in\C$ and $\beta\in\C^*$, define a linear map as follows:
\begin{align*}
	\varphi_{\alpha\beta}:U(L)&\to \mathcal{R}_2\otimes U(\mathfrak{b})\\
	L_n&\mapsto x_0^n(\partial_0+n\alpha)\otimes 1,\\
	d_n&\mapsto x_0^n\partial_1 \otimes 1 + nx_0^n\otimes e,\\
	a_n&\mapsto \beta x_0^nx_1\partial_1 \otimes 1 + x_0^nx_1\otimes (h-ne),\\
	b_n&\mapsto x_0^nx_1^{-1}\otimes 1,\\
	c_n&\mapsto -\beta x_0^n\otimes 1.
\end{align*}
Then the following statements are valid.
\begin{prop}
\label{alghomo}
The map $\varphi_{\alpha\beta}$ is a homomorphism of associative algebras.
\end{prop}
\begin{proof}
For $i=0,1$, we have $[\partial_i,x_i]=x_i$ and $[\partial_i,x_i^{-1}]=-x_i^{-1}$.
Hence, it is straightforward to see that $[x_0^n\partial_0, x_0^m]=mx_0^{n+m}$ for all $n,m\in\Z$.
It follows that 
\[
[\varphi_{\alpha\beta}(L_n),\varphi_{\alpha\beta}(x_m)]=m\varphi_{\alpha\beta}(x_{n+m}),
\]
where $x\in\left\{a,b,c,d\right\}$.
We also have the following computations:
\begin{align*}
 &\;[\varphi_{\alpha\beta}(d_n),\varphi_{\alpha\beta}(a_m)]\\
=&\; [x_0^n\partial_1 \otimes 1 + nx_0^n\otimes e, \beta x_0^mx_1\partial_1 \otimes 1 + x_0^mx_1\otimes (h-me)]\\
=&\;\beta[x_0^n\partial_1 \otimes 1, x_0^mx_1\partial_1 \otimes 1]+[x_0^n\partial_1 \otimes 1,x_0^mx_1\otimes (h-me)]+[nx_0^n\otimes e,x_0^mx_1\otimes (h-me)]\\
=&\;\beta x_0^{n+m}x_1\partial_1\otimes 1+x_0^{n+m}x_1\otimes(h-me)+nx_0^{n+m}x_1\otimes[e,h-me]\\
=&\;\beta x_0^{n+m}x_1\partial_1\otimes 1+x_0^{n+m}x_1\otimes(h-me-ne)\\
=&\;\varphi_{\alpha\beta}(a_{n+m}),\\[12pt]
&\;[\varphi_{\alpha\beta}(d_n),\varphi_{\alpha\beta}(b_m)]\\
=&\;[x_0^n\partial_1 \otimes 1 + nx_0^n\otimes e, x_0^mx_1^{-1}\otimes1]\\
=&\;-x_0^{n+m}x_1^{-1}\otimes1\\
=&\;-\varphi_{\alpha\beta}(b_{n+m}),\\[12pt]
&\;[\varphi_{\alpha\beta}(a_n),\varphi_{\alpha\beta}(b_m)]\\
=&\;[\beta x_0^nx_1\partial_1 \otimes 1 + x_0^nx_1\otimes (h-ne),x_0^mx_1^{-1}\otimes1]\\
=&\;\beta x_0^{n+m}[x_1\partial_1, x_1^{-1}]\otimes1\\
=&\;-\beta x_0^{n+m}\otimes1\\
=&\;\varphi_{\alpha\beta}(c_{n+m}).
\end{align*}
Hence $\varphi_{\alpha\beta}$ is a homomorphism of associative algebras.
\end{proof}

For a simple $\mathcal{R}_2$-module $P$ and a simple $U(\mathfrak{b})$-module $V$, the tensor product $P\otimes V$ is a module over the tensor product algebra $\mathcal{R}_2\otimes U(\mathfrak{b})$, hence can be lifted to a $U(L)$-module through the algebra homomorphism $\varphi_{\alpha\beta}$.
We denote such a $U(L)$-module by $\mathcal{F}_{\alpha\beta}(P,V)$.
More precisely, $\mathcal{F}_{\alpha\beta}(P,V)$ is a $U(L)$-module under the actions below:
\begin{equation}\label{mod}
\begin{split}
L_n(p\otimes v)&=x_0^n(\partial_0+n\alpha)p\otimes v,\\
d_n(p\otimes v)&=x_0^n\partial_1 p\otimes v + nx_0^np\otimes ev,\\
a_n(p\otimes v)&=\beta x_0^nx_1\partial_1 p\otimes v + x_0^nx_1p\otimes (h-ne)v,\\
b_n(p\otimes v)&=x_0^nx_1^{-1}p\otimes v,\\
c_n(p\otimes v)&=-\beta x_0^np\otimes v,
\end{split}
\end{equation}
where $p\in P$ and $v\in V$.

In the following, we shall study whether the $U(L)$-module $\mathcal{F}_{\alpha\beta}(P,V)$ is simple. Before this, we recall an elementary lemma first.
\begin{lemma}[Theorem 3.2 in \cite{D}]
\label{density}
Let $P$ be a simple module over a unital associative algebra $\mathcal{A}$ that has a countable basis.
Then for linearly independent elements $p_1,\ldots,p_m\in P$ and any elements $u_1,\ldots,u_m\in P$, there exists
some $a\in \mathcal{A}$ such that $ap_i = u_i$ for $i = 1,\ldots,m$.
\end{lemma}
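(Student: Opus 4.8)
The plan is to deduce this statement from the Jacobson density theorem, so that the only non-formal point becomes the identification $\mathrm{End}_{\mathcal{A}}(P)=\C$. First I would record the easy observations: a simple module is cyclic, so $P=\mathcal{A}p$ for any nonzero $p\in P$, and since $\mathcal{A}$ has a countable basis this forces $\dim_{\C}P\le\aleph_{0}$. By Schur's lemma $D:=\mathrm{End}_{\mathcal{A}}(P)$ is a division $\C$-algebra, and as $P$ is a faithful left $D$-module we get $\dim_{\C}D\le\dim_{\C}P\le\aleph_{0}$.

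The core step is to show $D=\C$ (this is where the countability hypothesis, together with $\C$ being uncountable and algebraically closed, is really used). Let $d\in D$. Since $d$ is a $\C$-linear operator on $P$ commuting with the $\mathcal{A}$-action, any eigenspace $\ker(d-\lambda\,\mathrm{id})$ with $\lambda\in\C$ is an $\mathcal{A}$-submodule; if it is nonzero it is all of $P$, forcing $d=\lambda\in\C$. So suppose $d\notin\C$. Then $d$ has no eigenvalue, and each $d-\lambda$ is an injective $\mathcal{A}$-endomorphism, hence (its image being a nonzero submodule) bijective, hence invertible in $D$. Consequently $d$ is a root of no nonzero polynomial over $\C$ — such a polynomial factors into linear terms $c\prod_{j}(d-\mu_{j})$, a product of invertibles, hence nonzero — so $\C[d]\cong\C[X]$, and $D$ being a division ring gives an embedding $\C(d)\cong\C(X)\hookrightarrow D$. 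But $\{(d-\lambda)^{-1}\mid\lambda\in\C\}$ is a $\C$-linearly independent subset of $\C(d)$ by uniqueness of partial fractions, producing an uncountable linearly independent family in $D$ and contradicting $\dim_{\C}D\le\aleph_{0}$. Hence $D=\C$.

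With $\mathrm{End}_{\mathcal{A}}(P)=\C$ in hand, the Jacobson density theorem says the image of $\mathcal{A}$ in $\mathrm{End}_{\C}(P)$ is dense in the finite topology, which is exactly the assertion: for $\C$-linearly independent $p_{1},\ldots,p_{m}$ and arbitrary $u_{1},\ldots,u_{m}$ there is $a\in\mathcal{A}$ with $ap_{i}=u_{i}$ for all $i$. If a self-contained argument is wanted, I would instead prove this density statement directly by induction on $m$; the inductive step reduces to checking that $\{a\in\mathcal{A}\mid ap_{i}=0\text{ for }i<m\}\cdot p_{m}=P$, and if that submodule vanished one could transport the action along it to build a nonzero element of $\mathrm{End}_{\mathcal{A}}(P)=\C$ witnessing a $\C$-linear dependence of $p_{m}$ on $p_{1},\ldots,p_{m-1}$, a contradiction. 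The main obstacle is thus the lemma $D=\C$; the rest is the standard density machinery.
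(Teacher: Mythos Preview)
The paper does not actually prove this lemma; it is stated as a citation (Theorem~3.2 in \cite{D}) and used as a black box, so there is no proof in the paper to compare against. Your argument is the standard and correct one: Dixmier's lemma (countable dimension plus uncountability and algebraic closedness of $\C$) forces $\mathrm{End}_{\mathcal{A}}(P)=\C$, and then Jacobson density gives the conclusion. The sketch of the inductive proof of density is also fine. Nothing is missing.
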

In order to study the module $\mathcal{F}_{\alpha\beta}(P,V)$, it is necessary to characterize the images of the map $\varphi_{\alpha\beta}$.
Denote by $\im$ the image of $\varphi_{\alpha\beta}$.
\begin{prop}
\label{image}
The elements
\[
x_0^{\pm1}\otimes1,\quad \partial_{x_0}\otimes1,\quad x_1\otimes e,\quad x_1^{-1}\otimes1,\quad \partial_{x_1}\otimes1,\quad 1\otimes e,\quad\mbox{and}\quad 1\otimes h
\]
lie in $\im$.
\end{prop}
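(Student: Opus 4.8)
The plan is to extract the listed generators of $\mathcal{R}_2\otimes U(\mathfrak{b})$ one at a time from the explicit images $\varphi_{\alpha\beta}(L_n), \varphi_{\alpha\beta}(x_n)$, using only elements of the subalgebra $\im$. First I would note that $\varphi_{\alpha\beta}(c_0) = -\beta\, 1\otimes 1$, so (since $\beta\neq 0$) the identity $1\otimes 1$ lies in $\im$, and more usefully $\varphi_{\alpha\beta}(c_n) = -\beta\, x_0^n\otimes 1$, which immediately gives $x_0^{\pm 1}\otimes 1 \in \im$. Next, from $\varphi_{\alpha\beta}(L_0) = \partial_0\otimes 1 = x_0\partial_{x_0}\otimes 1$ and $\varphi_{\alpha\beta}(L_1) = x_0(\partial_0+\alpha)\otimes 1 = (x_0\partial_0 + \alpha x_0)\otimes 1$, I would compute $\varphi_{\alpha\beta}(L_1) - \alpha\,\varphi_{\alpha\beta}(c_1)/(-\beta) \cdot(\text{suitable scalar})$; more directly, $\varphi_{\alpha\beta}(L_1) = x_0\partial_0\otimes 1 + \alpha(x_0\otimes 1)$ and since $x_0\otimes 1\in\im$ we get $x_0\partial_0\otimes 1\in\im$; then $x_0^{-1}\otimes 1\in\im$ lets us recover $\partial_0\otimes 1 = (x_0^{-1}\otimes1)(x_0\partial_0\otimes1)$, hence also $\partial_{x_0}\otimes 1 = (x_0^{-1}\otimes 1)(\partial_0\otimes 1)\in\im$. (Alternatively just use $\varphi_{\alpha\beta}(L_0) = \partial_0\otimes1$ directly and multiply by $x_0^{-1}\otimes 1$.)

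The $x_1$-variables require a little more care. We have $\varphi_{\alpha\beta}(b_0) = x_1^{-1}\otimes 1$, giving $x_1^{-1}\otimes 1\in\im$. For $x_1\otimes e$: compare $\varphi_{\alpha\beta}(a_0) = \beta x_1\partial_1\otimes1 + x_1\otimes h$ with $\varphi_{\alpha\beta}(a_1) = \beta x_0 x_1\partial_1\otimes1 + x_0x_1\otimes(h-e)$, and multiply the first by $x_0\otimes 1\in\im$ to get $\beta x_0x_1\partial_1\otimes1 + x_0x_1\otimes h\in\im$; subtracting from $\varphi_{\alpha\beta}(a_1)$ yields $x_0x_1\otimes e\in\im$, and then multiplying by $x_0^{-1}\otimes1$ gives $x_1\otimes e\in\im$. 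Next, for $d$: $\varphi_{\alpha\beta}(d_0) = x_0\partial_1\partial_{x_1}$... — more precisely $\varphi_{\alpha\beta}(d_0) = \partial_1\otimes 1 = x_1\partial_{x_1}\otimes 1\in\im$, and $\varphi_{\alpha\beta}(d_1) = x_0\partial_1\otimes1 + x_0\otimes e$; multiplying $\varphi_{\alpha\beta}(d_0)$ by $x_0\otimes1$ and subtracting from $\varphi_{\alpha\beta}(d_1)$ gives $x_0\otimes e\in\im$, hence $1\otimes e = (x_0^{-1}\otimes1)(x_0\otimes e)\in\im$. Then $1\otimes h$ follows: from $x_1\otimes e\in\im$ and $1\otimes e\in\im$ and $x_1^{-1}\otimes1\in\im$ one has $x_1\partial_1\otimes1 = \frac1\beta(\varphi_{\alpha\beta}(a_0) - x_1\otimes h)$ — instead, use that $x_1\otimes e\in\im$ and $1\otimes e\in\im$ are both available, and $\varphi_{\alpha\beta}(a_0) = \beta(x_1\partial_1\otimes1) + x_1\otimes h$; but $x_1\partial_1\otimes 1 = (x_1^{-1}\otimes1)\cdot$? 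No — rather, note $x_1\otimes e$ and the already-obtained $\partial_1\otimes 1 = x_1\partial_{x_1}\otimes1$ combine: actually the clean route is $1\otimes h$: from $a_0$ we get $x_1\otimes h\in\im$ once $x_1\partial_1\otimes1\in\im$, and $x_1\partial_1\otimes1 = (\partial_1\otimes1)$ which we have; then $1\otimes h = (x_1^{-1}\otimes1)(x_1\otimes h)\in\im$. Finally $\partial_{x_1}\otimes1 = (x_1^{-1}\otimes1)(x_1\partial_{x_1}\otimes1) = (x_1^{-1}\otimes1)(\partial_1\otimes1)\in\im$.

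I expect no serious obstacle here — the statement is a finite bookkeeping exercise — but the one point demanding genuine attention is keeping the $U(\mathfrak{b})$-factor and the $\mathcal{R}_2$-factor from contaminating each other: every manipulation must stay inside the associative subalgebra $\im$, so I can only use the seven target elements and products/differences already shown to lie in $\im$, never an arbitrary element of $\mathcal{R}_2\otimes U(\mathfrak{b})$. The order above is chosen so that each new element is produced purely from previously-verified ones together with the raw images $\varphi_{\alpha\beta}(L_n), \varphi_{\alpha\beta}(a_n), \varphi_{\alpha\beta}(b_n), \varphi_{\alpha\beta}(c_n), \varphi_{\alpha\beta}(d_n)$; I would present it as a short chain of displayed identities for $n\in\{0,1\}$, with the scalar $\beta\in\C^*$ invoked exactly where division by $\beta$ occurs.
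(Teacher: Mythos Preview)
Your overall strategy is the same as the paper's: exhibit each listed element as an explicit word in the images $\varphi_{\alpha\beta}(c_{\pm1})$, $\varphi_{\alpha\beta}(L_0)$, $\varphi_{\alpha\beta}(a_0)$, $\varphi_{\alpha\beta}(a_1)$, $\varphi_{\alpha\beta}(b_0)$, $\varphi_{\alpha\beta}(d_0)$, $\varphi_{\alpha\beta}(d_1)$. Most of your identifications are correct and agree (up to cosmetic differences) with the paper's list of one-line equalities, e.g.\ $\partial_{x_0}\otimes1=-\beta^{-1}\varphi_{\alpha\beta}(c_{-1}L_0)$ and $\partial_{x_1}\otimes1=\varphi_{\alpha\beta}(b_0d_0)$. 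Your derivation of $1\otimes e$ via $d_0,d_1$ is a harmless variant of the paper's route through $(x_1\otimes e)(x_1^{-1}\otimes1)$.

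There is, however, one genuine slip that breaks the argument as written. In your derivation of $1\otimes h$ you end by asserting ``$x_1\partial_1\otimes1=(\partial_1\otimes1)$ which we have''. This is false: by definition $\partial_1=x_1\partial_{x_1}$, so $x_1\partial_1=x_1^{2}\partial_{x_1}\neq x_1\partial_{x_1}=\partial_1$. Consequently you have not established $x_1\partial_1\otimes1\in\im$, and your attempt to isolate $x_1\otimes h$ from $\varphi_{\alpha\beta}(a_0)=\beta\,x_1\partial_1\otimes1+x_1\otimes h$ fails. (Note also that $x_1\otimes1$ is not among the elements claimed to lie in $\im$, so any route that passes through $x_1\otimes h$ and then left-multiplies by $x_1^{-1}\otimes1$ is suspect unless you first produce $x_1\partial_1\otimes1$ honestly.) The clean fix is exactly the paper's: multiply $\varphi_{\alpha\beta}(a_0)$ on the \emph{left} by $x_1^{-1}\otimes1=\varphi_{\alpha\beta}(b_0)$ \emph{before} separating the summands, obtaining
\[
\varphi_{\alpha\beta}(b_0a_0)=\beta\,\partial_1\otimes1+1\otimes h,
\]
and then subtract $\beta\varphi_{\alpha\beta}(d_0)=\beta\,\partial_1\otimes1$ to get $1\otimes h=\varphi_{\alpha\beta}(b_0a_0-\beta d_0)\in\im$. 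Apart from this, the presentation should be rewritten as a short list of displayed identities rather than a running internal monologue with false starts.
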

\begin{proof}
The proof is straightforward from the below equations:
\begin{align*}
x_0^{\pm1}\otimes1&=-\beta^{-1}\varphi_{\alpha\beta}(c_{\pm1}),\\
\partial_{x_0}\otimes1&=-\beta^{-1}\varphi_{\alpha\beta}(c_{-1}L_0),\\
x_1\otimes e&=\beta^{-1}\varphi_{\alpha\beta}(c_{-1}a_1-c_0a_0),\\
x_1^{-1}\otimes1&=\varphi_{\alpha\beta}(b_0),\\
1\otimes e&=\beta^{-1}\varphi_{\alpha\beta}\left((c_{-1}a_1-c_0a_0)b_0\right),\\
1\otimes h&=\varphi_{\alpha\beta}(b_0a_0-\beta d_0),\\
\partial_{x_1}\otimes1&=\varphi_{\alpha\beta}(b_0d_0).\qedhere
\end{align*}
\end{proof}
Now, we are in a position to prove the \textbf{main result} of this section.
\begin{thm}
Assume that $V$ is infinite-dimensional. Then the $U(L)$-module $\mathcal{F}_{\alpha\beta}(P,V)$ is simple. 
\end{thm}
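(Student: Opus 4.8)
To show $\mathcal{F}_{\alpha\beta}(P,V) = P\otimes V$ is simple, I would take an arbitrary nonzero submodule $M$ and argue that $M = P\otimes V$.  The key observation is Proposition~\ref{image}: since the operators $x_0^{\pm 1}\otimes 1$, $\partial_{x_0}\otimes 1$, $x_1^{-1}\otimes 1$, $\partial_{x_1}\otimes 1$ all lie in $\mathrm{Im}(\varphi_{\alpha\beta})$, the module $M$ is automatically stable under a copy of $\mathcal{R}_2\otimes 1$; together with $1\otimes h$ and $1\otimes e$ this means $M$ is a module over the subalgebra $\mathcal{A}' := \mathcal{R}_2\otimes U(\mathfrak{b})' \subseteq \mathcal{R}_2\otimes U(\mathfrak{b})$ generated by these seven elements.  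In fact $\mathcal{R}_2\otimes U(\mathfrak{b})$ is generated by $x_0^{\pm1},x_1^{\pm1},\partial_{x_0},\partial_{x_1}$ together with $h,e$, and all of $x_0^{\pm1},\partial_{x_0},x_1^{-1},\partial_{x_1},h,e$ are in the list — the only missing generator is $x_1$ itself (we only have $x_1\otimes e$, not $x_1\otimes 1$).  So the real content is to promote $\mathcal{R}_2\otimes U(\mathfrak{b})$-submodule-hood from $\mathcal{R}_2'\otimes U(\mathfrak{b})$-stability, where $\mathcal{R}_2' = \C[x_0^{\pm1},x_1^{-1},\partial_{x_0},\partial_{x_1}]$.

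**Reduction to a single tensor factor.**  First I would fix a nonzero element $w\in M$ and write it as $w = \sum_{i=1}^{k} p_i\otimes v_i$ with the $v_i$ linearly independent in $V$ and $p_i\neq 0$ in $P$.  Using the $\mathcal{R}_2$-action (available by Proposition~\ref{image}) and the simplicity of $P$ as an $\mathcal{R}_2$-module — invoking Lemma~\ref{density} (density), noting $\mathcal{R}_2$ has a countable basis — I can find $r\in\mathcal{R}_2$ with $rp_1$ any prescribed nonzero vector and $rp_i = 0$ for $i\geq 2$ after first separating the $p_i$; more carefully, since the $p_i$ need not be independent, I would instead argue that $M$ contains $p\otimes v'$ for some fixed nonzero $p\in P$ and some nonzero $v'\in V$: extract a minimal-length relation in $M$, hit it with $\mathcal{R}_2'$ to make all but one $P$-component collapse (this works since $\mathcal{R}_2'\supseteq$ a dense set of operators on $P$ — here one must check $\mathcal{R}_2'$ already acts densely on $P$, or rather pass through the full $\mathcal{R}_2$ which we do have).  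The upshot of this first stage is: $M \supseteq p_0\otimes V_0$ where $V_0$ is a nonzero subspace of $V$, in fact $V_0$ can be taken to be all of $V$ once we observe that $1\otimes h$ and $1\otimes e$ generate $U(\mathfrak{b})$ and act, so $M\supseteq p_0\otimes U(\mathfrak{b})v_0 = p_0\otimes V$ for a single vector $v_0$ by simplicity of $V$.

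**From $p_0\otimes V$ to all of $P\otimes V$, and where the difficulty lies.**  Once $p_0\otimes V \subseteq M$, I would apply $\mathcal{R}_2\otimes 1$: for any $r\in\mathcal{R}_2$ and $v\in V$, $r(p_0\otimes v) = rp_0\otimes v \in M$, and by simplicity of $P$ (Lemma~\ref{density} again) $\{rp_0 : r\in\mathcal{R}_2\} = P$, so $P\otimes v\subseteq M$ for every $v$, i.e. $M = P\otimes V$.  Thus the crux — and the step I expect to be the main obstacle — is the very first reduction: showing that a nonzero submodule $M$ of $P\otimes V$ necessarily contains an element of the form $p_0\otimes v$ (a \emph{single} pure tensor with $p_0\neq 0$).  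The subtlety is that we only have the \emph{subalgebra} $\mathcal{R}_2'\otimes U(\mathfrak{b})$ acting through elements guaranteed by Proposition~\ref{image} — but $x_1\otimes 1$ is \emph{not} among them — so I cannot directly separate the $V$-components using an element of the form $r\otimes 1$ with arbitrary $r$.  I would resolve this by using that the operators actually listed, namely $x_0^{\pm1}\otimes 1$, $\partial_{x_0}\otimes 1$, generate the full rank-one Weyl algebra $\mathcal{R}_0$, so $\mathcal{R}_0\otimes 1\subseteq\mathrm{Im}(\varphi_{\alpha\beta})$ acts on $M$; restricting $P$ as an $\mathcal{R}_0$-module and using that the $x_1$-degrees are separated by the $\mathcal{R}_1$-weight operator $\partial_1 = x_1\partial_{x_1}$ (which is $\partial_{x_1}\otimes 1$ composed appropriately, or obtained from $x_1^{-1}\otimes 1$ and $\partial_{x_1}\otimes 1$), one peels off homogeneous $x_1$-components of $w$.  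Combined with the density lemma this isolates a pure tensor, completing the argument.  The infinite-dimensionality of $V$ enters precisely here: it guarantees $U(\mathfrak{b})v_0 = V$ is genuinely infinite-dimensional so that no finite-dimensional $\mathfrak{b}$-subrepresentation can obstruct the density argument, and more importantly it is what makes $\mathcal{F}_{\alpha\beta}(P,V)$ fail to have the small weight-module-type submodules that occur when $V$ is finite-dimensional.
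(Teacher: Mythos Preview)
Your proposal correctly isolates the central obstacle --- that $x_1\otimes 1\notin\mathrm{Im}(\varphi_{\alpha\beta})$, only $x_1\otimes e$ --- but the workaround you sketch does not close the gap, and the step where you ``apply $\mathcal{R}_2\otimes 1$'' invokes exactly the operator you have just argued is unavailable.

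The proposed fix, peeling off ``homogeneous $x_1$-components'' of elements of $P$ via $\partial_1$, presupposes that a general simple $\mathcal{R}_2$-module carries an $x_1$-grading, which it need not. More concretely, the subalgebra $\mathcal{R}_2'=\C[x_0^{\pm1},x_1^{-1},\partial_{x_0},\partial_{x_1}]$ does \emph{not} act densely on simple $\mathcal{R}_2$-modules: already for the rank-one module $M_a=\bigoplus_{n\in\Z}\C x_1^{n}$ (with $\partial_{x_1}x_1^{n}=(a+n)x_1^{n-1}$), the subspace $\C[x_1^{-1}]$ is stable under $x_1^{-1}$ and $\partial_{x_1}$ but proper, so Lemma~\ref{density} fails over $\mathcal{R}_2'$. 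Thus neither your reduction to a pure tensor nor your final sweep over $P$ goes through as written.

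The paper's argument exploits the missing generator in the form that \emph{is} available. Write a nonzero element of the submodule as $\sum_i p_i\otimes v_i$ with the $p_i$ (not the $v_i$) linearly independent. For an arbitrary target $p\in P$, density over the full $\mathcal{R}_2$ produces $a\in\mathcal{R}_2$ with $ap_1=p$ and $ap_i=0$ for $i>1$. Writing $a$ as a finite sum of ordered monomials $x_0^{r}x_1^{s}\partial_{x_0}^{u}\partial_{x_1}^{v}$ and noting that $x_1^{s}\otimes e^{s}=(x_1\otimes e)^{s}$, one sees that $a\otimes e^{n}\in\mathrm{Im}(\varphi_{\alpha\beta})$ for every $n$ at least the maximal positive $x_1$-exponent appearing (pad with factors of $1\otimes e$). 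Applying $a\otimes e^{n}$ yields $p\otimes e^{n}v_1$ in the submodule. Here is precisely where infinite-dimensionality enters: it forces $e$ to act injectively on the simple $\mathfrak{b}$-module $V$ (else $\ker e$ is a nonzero submodule and $V$ would be one-dimensional), so $e^{n}v_1\neq 0$. Then $1\otimes U(\mathfrak{b})\subset\mathrm{Im}(\varphi_{\alpha\beta})$ and simplicity of $V$ give $p\otimes V$ in the submodule; since $p$ was arbitrary, you are done. Your description of the role of $\dim V=\infty$ was too vague to reach this mechanism.
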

\begin{proof}
Since $V$ is infinite-dimensional, we know that $e\in\mathfrak{b}$ acts on $V$ injectively.
For a non-zero submodule $N$, suppose $m=\sum_{i=1}^{m}p_i\otimes v_i$ is any non-zero element of $N$ with $p_1,\ldots,p_m$ being linearly independent.
Take any $0\ne p\in P$, set $u_1=p$ and $u_2=\cdots=u_m=0$.
Then by Lemma \ref{density}, there exists some $a\in\mathcal{R}_2$ such that $ap_i=u_i$.
From Proposition \ref{image}, we deduce that there exists some $n\gg 0$ such that $a\otimes e^n\in\im$.
It follows that $0\ne p\otimes u\in N$, where $u=e^nv_1$. 
Note that $V=U(\mathfrak{b})u$, by Proposition \ref{image} again, we know that $p\otimes V\subset N$. Since $p\in P$ is arbitrary, we conclude that $N=\mathcal{F}_{\alpha\beta}(P,V)$.
\end{proof}
Taking $P$ and $V$ to be some specific modules, we obtain some interesting simple $U(L)$-modules.
Two examples are listed below.

For $\mathbf{a}=(a_0,a_1)\in\C^2$, the space $M_{\mathbf{a}}=\bigoplus_{n_0,n_1\in\Z}\C x_0^{n_0}x_1^{n_1}$ is a simple $\mathcal{R}_2$-module, where $x_i^{\pm1}$ acts as multiplication and $\partial_i\left(x_0^{n_0}x_1^{n_1}\right)=(a_i+n_i)x_0^{n_0}x_1^{n_1}$.
Recall that a $\mathcal{W}\ltimes\widehat{H_4}$-module is called a weight module if the subalgebra $H=\C L_0\oplus\C d_0$ acts semisimply.
\begin{ex}
	\label{wei}
Suppose $P=M_{\mathbf{a}}$ and $V$ is an infinite-dimensional simple $U(\mathfrak{b})$-module.
Then
\[
\mathcal{F}_{\alpha\beta}(M_{\mathbf{a}},V)=\bigoplus_{n_0,n_1\in\Z}x_0^{n_0}x_1^{n_1}\otimes V
\]
is a simple weight $U(L)$-module with infinite-dimensional weight spaces.
\end{ex}
For $\boldsymbol{\lambda}=(\lambda_0,\lambda_1)\in(\C^*)^2$, let $\Omega(\boldsymbol{\lambda})=\C[\partial_0,\partial_1]$.
Then $\Omega(\boldsymbol{\lambda})$ is a simple $\mathcal{R}_2$-module, where $\partial_i$ acts as multiplication and $x_i^nf(\partial_0,\partial_1)=\lambda_i^nf(\partial_0-\delta_{i0}n,\partial_1-\delta_{i1}n)$.
\begin{ex}
	\label{iuh}
If $P$ is the simple $\mathcal{R}_2$-module $\Omega(\boldsymbol{\lambda})$, then
\[
\mathcal{F}_{\alpha\beta}(\Omega(\boldsymbol{\lambda}),V)=\bigoplus_{n_0,n_1\in\Z}\partial_0^{n_0}\partial_1^{n_1}\otimes V
\]
is a simple $U(L)$-module and is free of infinite rank when restricted to $U(H)$.
\end{ex}
Next, we determine the isomorphism classes of the simple modules $\mathcal{F}_{\alpha\beta}(P,V)$.
\begin{thm}
Assume that $\alpha,\xi\in\C$ and $\beta,\mu\in\C^*$.
Suppose $P,Q$ are simple $\mathcal{R}_2$-modules and $V$ is an infinite-dimensional simple $U(\mathfrak{b})$-module. 
Then we have $\mathcal{F}_{\alpha\beta}(P,V)\cong\mathcal{F}_{\xi\mu}(Q,W)$ if and only if $(\alpha,\beta)=(\xi,\mu)$, $P\cong Q$ as $\mathcal{R}_2$-modules, and $V\cong W$ as $U(\mathfrak{b})$-modules.
\end{thm}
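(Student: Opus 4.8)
The ``if'' direction is immediate: once $(\alpha,\beta)=(\xi,\mu)$, any $\mathcal{R}_2$-isomorphism $f\colon P\to Q$ and any $U(\mathfrak{b})$-isomorphism $g\colon V\to W$ give a $U(L)$-isomorphism $f\otimes g$, because every operator in \eqref{mod} is built functorially from the $\mathcal{R}_2$-action on the first tensor slot and the $U(\mathfrak{b})$-action on the second. So assume $\Phi\colon\mathcal{F}_{\alpha\beta}(P,V)\to\mathcal{F}_{\xi\mu}(Q,W)$ is a $U(L)$-isomorphism. My plan is: first recover the scalars $(\alpha,\beta)$ from elements of $U(L)$ that act as scalars; then, knowing the two structure maps agree, use Proposition \ref{image} to show that $\Phi$ is linear separately for the ``first-slot'' $\mathcal{R}_2$-action and the ``second-slot'' $U(\mathfrak{b})$-action, and conclude $P\cong Q$, $V\cong W$ by comparing isotypic decompositions.

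For the scalars: from \eqref{mod}, $c_0$ acts as $-\beta\,\mathrm{id}$ on $\mathcal{F}_{\alpha\beta}(P,V)$ and as $-\mu\,\mathrm{id}$ on $\mathcal{F}_{\xi\mu}(Q,W)$, so $\beta=\mu$. A short computation gives $\varphi_{\alpha\beta}(c_0L_0)=-\beta\,\partial_0\otimes1$ and $\varphi_{\alpha\beta}(c_{-1}L_1)=-\beta(\partial_0+\alpha)\otimes1$, so $c_0L_0-c_{-1}L_1\in U(L)$ acts as the scalar $\beta\alpha$ on the source and as $\mu\xi$ on the target; since $\beta=\mu\neq0$ this yields $\alpha=\xi$. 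From here $\varphi_{\alpha\beta}=\varphi_{\xi\mu}$, so $\Phi$ commutes with the action of every element of $\mathrm{Im}(\varphi_{\alpha\beta})$, in particular with all the operators listed in Proposition \ref{image}.

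Since $1\otimes e$ and $1\otimes h$ lie in $\mathrm{Im}(\varphi_{\alpha\beta})$ and generate $1\otimes U(\mathfrak{b})$, the isomorphism $\Phi$ is $U(\mathfrak{b})$-linear for the second-slot action. Through that action $P\otimes V$ is a direct sum of copies of $V$ and $Q\otimes W$ a direct sum of copies of $W$, i.e. semisimple modules that are isotypic of types $V$ and $W$ respectively; an isomorphism therefore forces $V\cong W$. The delicate point is $P\cong Q$, and the obstacle is that $\mathrm{Im}(\varphi_{\alpha\beta})$ does \emph{not} contain $x_1\otimes1$, so the first-slot $\mathcal{R}_2$-action is not literally present.

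To get around this I would use the structure of $V$. For an infinite-dimensional simple $U(\mathfrak{b})$-module, $e$ acts injectively (as already observed; alternatively $\ker e$ is a submodule and $U(\mathfrak{b})/(e)\cong\C[h]$ has only $1$-dimensional simple modules), and $eV$ is a submodule because $h(ev)=e(hv+v)$, hence $eV=V$; thus $e$ is bijective on $V$, and likewise on $W$. Consequently $1\otimes e$ is an invertible operator on $P\otimes V$, and $(x_1\otimes e)(1\otimes e)^{-1}$ is exactly the operator $p\otimes v\mapsto x_1p\otimes v$, i.e. $x_1\otimes\mathrm{id}$. Since $\Phi$ commutes with $x_1\otimes e$ and with $1\otimes e$ (both in $\mathrm{Im}(\varphi_{\alpha\beta})$), it commutes with $(1\otimes e)^{-1}$ and hence with $x_1\otimes\mathrm{id}$; combined with $x_0^{\pm1}\otimes1,\ \partial_{x_0}\otimes1,\ x_1^{-1}\otimes1,\ \partial_{x_1}\otimes1\in\mathrm{Im}(\varphi_{\alpha\beta})$, it follows that $\Phi$ is linear over the algebra generated by $x_0^{\pm1},\partial_{x_0},x_1^{\pm1},\partial_{x_1}$ on the first slot, that is, over all of $\mathcal{R}_2$. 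Then $P\otimes V$ and $Q\otimes W$ are, as $\mathcal{R}_2$-modules via the first slot, semisimple and isotypic of types $P$ and $Q$, so $\Phi$ forces $P\cong Q$, completing the proof. The main work, and the only genuinely non-formal step, is this reduction --- realizing $x_1\otimes\mathrm{id}$ as $(x_1\otimes e)(1\otimes e)^{-1}$ --- so establishing the bijectivity of $e$ on an infinite-dimensional simple $U(\mathfrak{b})$-module is the crux.
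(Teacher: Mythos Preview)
Your proof is correct and your handling of the ``only if'' direction is genuinely different from the paper's. The paper does not first pin down $\alpha=\xi$; instead, after getting $\beta=\mu$ from $c_0$, it fixes a single pure tensor $p\otimes v$, writes $\phi(p\otimes v)=\sum q_i\otimes w_i$, and then uses the density theorem (Lemma~\ref{density}) together with Proposition~\ref{image} to find, for each $q\in Q$, an element $a_q\in\mathcal{R}_2$ and an $n\gg0$ such that $a_q\otimes e^n$ has a common preimage under $\varphi_{\alpha\beta}$ and $\varphi_{\xi\mu}$; applying this element kills all $q_i$ except $q_1$ and yields $\phi(a_qp\otimes e^nv)=q\otimes e^nw_1$. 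From there the paper reads off explicit module isomorphisms $\phi_1\colon Q\to P$ and $\phi_2\colon W\to V$, and deduces $\alpha=\xi$ at the very end.

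Your route is more structural: you extract $\alpha=\xi$ immediately via the scalar $c_0L_0-c_{-1}L_1$, so that $\varphi_{\alpha\beta}=\varphi_{\xi\mu}$ and $\Phi$ commutes with everything in the common image. Then you argue by isotypic components, the key step being the observation that $e$ is \emph{bijective} on an infinite-dimensional simple $U(\mathfrak{b})$-module (the paper only uses injectivity), which lets you recover the missing operator $x_1\otimes\mathrm{id}$ as $(x_1\otimes e)(1\otimes e)^{-1}$. This avoids the density theorem entirely and gives a cleaner conceptual picture: $\Phi$ is simultaneously $\mathcal{R}_2$-linear in the first slot and $U(\mathfrak{b})$-linear in the second, so the isotypic types must match. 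The paper's argument, by contrast, is more constructive --- it actually produces the isomorphisms $P\cong Q$ and $V\cong W$ from $\Phi$ --- but at the cost of invoking Lemma~\ref{density} and a slightly ad hoc ``$n\gg0$'' step. One small point worth making explicit in your write-up: when you invert $1\otimes e$ on the target $Q\otimes W$, you are using that $e$ is bijective on $W$ as well, which follows because you have already established $V\cong W$ in the previous paragraph.
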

\begin{proof}
Suppose $\phi:\mathcal{F}_{\alpha\beta}(P,V)\to\mathcal{F}_{\xi\mu}(Q,W)$ is an isomorphism map.
The action of $c_0$ implies $\beta=\mu$.
Fix $0\ne p\in P$ and $0\ne v\in V$, assume that $\phi(p\otimes v)=\sum_{i=1}^{m}q_i\otimes w_i$, where $q_1,\ldots,q_m$ are linearly independent.
For any $q\in Q$, by Lemma \ref{density}, there exists $a_q\in\mathcal{R}_2$ such that $a_qq_1=q$ and $a_qq_i=0$ for $i=2,\ldots,m$.
Using Proposition \ref{image} and noting that $\beta=\mu$, we know that there exists $n\gg0$ such that $a_q\otimes e^n$ has a same preimage of the maps $\varphi_{\alpha\beta}$ and $\varphi_{\xi\mu}$, respectively. 
Hence, we have $\phi(a_qp\otimes e^nv)=q\otimes e^nw_1$, where $e^nv$ and $e^nw_1$ are both non-zero.
By Proposition \ref{image} again, there exists a $U(\mathfrak{b})$-module isomorphism $\phi_2:W\to V$ such that $\phi^{-1}(q\otimes w)=a_qp\otimes\phi_2(w)$. 
It follows that the map $\phi_1:Q\to P, q\mapsto a_qp$ is an isomorphism of $\mathcal{R}_2$-modules. 
Now, it is easy to see that $\alpha=\xi$.
\end{proof}
If $V$ is finite-dimensional, then $\mathrm{dim}V=1$, $eV=0$ and $h$ acts on $V$ as a scalar $\epsilon\in\C$.
In this case, we denote $V$ by $\C_\epsilon$.
Define 
\begin{equation}
\label{operator}
Q=b_0a_0+c_0d_0\in U(L).
\end{equation}
\begin{prop}
The modules $\mathcal{F}_{\xi\mu}(Q,\C_\epsilon)$ and $\mathcal{F}_{\alpha\beta}(P,V)$ with $V$ being infinite-dimensional are always non-isomorphic.
\end{prop}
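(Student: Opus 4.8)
The plan is to single out the element $Q=b_0a_0+c_0d_0$ of $U(L)$ from \eqref{operator} and to show that it acts as a scalar on every module of the shape $\mathcal{F}_{\xi\mu}(\,\cdot\,,\C_\epsilon)$ but never acts as a scalar on $\mathcal{F}_{\alpha\beta}(\,\cdot\,,V)$ when $V$ is infinite-dimensional. Since a module isomorphism intertwines the action of each element of $U(L)$, and since a scalar operator is necessarily carried to a scalar operator (with the same scalar), this discrepancy immediately yields the non-isomorphism, with no reference to the Weyl-module first factors needed.

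First I would record the action of $Q$ on an arbitrary module $\mathcal{F}_{\alpha\beta}(P,V)$. Putting $n=0$ in \eqref{mod} gives
\begin{align*}
a_0(p\otimes v)&=\beta x_1\partial_1 p\otimes v+x_1p\otimes hv,& b_0(p\otimes v)&=x_1^{-1}p\otimes v,\\
c_0(p\otimes v)&=-\beta\, p\otimes v,& d_0(p\otimes v)&=\partial_1 p\otimes v.
\end{align*}
Since $x_1^{-1}x_1=1$ in $\mathcal{R}_1$, one gets $b_0a_0(p\otimes v)=\beta\partial_1 p\otimes v+p\otimes hv$ and $c_0d_0(p\otimes v)=-\beta\partial_1 p\otimes v$, so that
\[
Q(p\otimes v)=p\otimes hv .
\]
In other words $Q$ acts on $\mathcal{F}_{\alpha\beta}(P,V)$ simply as $\mathrm{id}_P\otimes h_V$, where $h_V$ denotes the action of $h\in\mathfrak{b}$ on $V$. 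This collapse of $Q$ to $1\otimes h$ is really the only idea in the argument; the rest is bookkeeping.

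From here the conclusion is immediate. If $V=\C_\epsilon$, then $h_V=\epsilon$, so $Q$ acts as the scalar $\epsilon$ on $\mathcal{F}_{\xi\mu}(\,\cdot\,,\C_\epsilon)$. If instead $V$ is infinite-dimensional, then $h_V$ is not scalar: were $h_V=\lambda\,\mathrm{id}_V$, the relation $[h,e]=e$ would force $e_V=[h_V,e_V]=0$, whence $V$ would be a simple module over $U(\mathfrak{b})/(e)\cong\C[h]$ and hence one-dimensional, contradicting $\dim V=\infty$. Since $P\neq 0$ and $p\otimes(-)\colon V\to P\otimes V$ is injective for any fixed nonzero $p$, it follows that $\mathrm{id}_P\otimes h_V$ is not a scalar operator on $\mathcal{F}_{\alpha\beta}(P,V)$, and so no isomorphism between $\mathcal{F}_{\xi\mu}(\,\cdot\,,\C_\epsilon)$ and $\mathcal{F}_{\alpha\beta}(\,\cdot\,,V)$ can exist. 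I do not anticipate a genuine obstacle here; the only step that needs care is this last non-scalarity argument, which is precisely where the hypothesis $\dim V=\infty$ is used.
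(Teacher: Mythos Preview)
Your proposal is correct and follows exactly the paper's approach: compute that $Q=b_0a_0+c_0d_0$ acts on $\mathcal{F}_{\alpha\beta}(P,V)$ as $\mathrm{id}_P\otimes h_V$, observe that this is the scalar $\epsilon$ when $V=\C_\epsilon$, and argue that $h$ cannot act as a scalar on an infinite-dimensional simple $U(\mathfrak{b})$-module. Your justification of this last point (via $[h,e]=e$ forcing $e_V=0$) is actually more explicit than the paper's, which simply asserts it.
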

\begin{proof}
By direct computations, we know that $Q$ acts on $\mathcal{F}_{\alpha\beta}(P,\C_\epsilon)$ as a scalar $\epsilon\in\C$.
For $p\in P$ and $v\in V$, $Q(p\otimes v)=p\otimes hv$.
Hence, if $\mathcal{F}_{\xi\mu}(Q,\C_\epsilon)\cong \mathcal{F}_{\alpha\beta}(P,V)$, then $h$ acts on $V$ as a scalar $\epsilon\in\C$, which is impossible since $V$ is an infinite-dimensional simple $U(\mathfrak{b})$-module.
\end{proof}
The structure of $\mathcal{F}_{\alpha\beta}(P,\C_\epsilon)$ is complicated. We give two examples to illustrate this.
\begin{ex}
$P=M_{\mathbf{a}}$ is the simple $\mathcal{R}_2$-module defined before Example \ref{wei}.
Then we have
\[
\mathcal{F}_{\alpha\beta}(M_{\mathbf{a}},\C_\epsilon)=\bigoplus_{n_0,n_1\in\Z}\C x_0^{n_0}x_1^{n_1},
\]
and the $U(L)$-actions are as follows:
\begin{equation*}
	\begin{split}
		L_n(x_0^{n_0}x_1^{n_1})&=(a_0+n_0+n\alpha)x_0^{n+n_0}x_1^{n_1},\\
		d_n(x_0^{n_0}x_1^{n_1})&=(a_1+n_1)x_0^{n+n_0}x_1^{n_1},\\
		a_n(x_0^{n_0}x_1^{n_1})&=(\beta a_1+\beta n_1+\epsilon)x_0^{n+n_0}x_1^{n_1+1},\\
		b_n(x_0^{n_0}x_1^{n_1})&=x_0^{n+n_0}x_1^{n_1-1},\\
		c_n(x_0^{n_0}x_1^{n_1})&=-\beta x_0^{n+n_0}x_1^{n_1}.
	\end{split}
\end{equation*}
It is easy to see that $\mathcal{F}_{\alpha\beta}(M_{\mathbf{a}},\C_\epsilon)$ is simple if and only if $\beta a_1+\beta n+\epsilon\ne0$ for any $n\in\Z$.
\end{ex}
\begin{ex}
Take $P=\Omega(\boldsymbol{\lambda})$ to be the simple $\mathcal{R}_2$-module defined before Example \ref{iuh}. Then we have
\[
\mathcal{F}_{\alpha\beta}(\Omega(\boldsymbol{\lambda}),\C_\epsilon)=\C[\partial_0,\partial_1],
\]
and the actions of $U(L)$ are as follows:
\begin{equation}
	\begin{split}
		L_nf(\partial_0,\partial_1)&=\lambda_1^n(\partial_0+n\alpha)f(\partial_0-n,\partial_1),\\
		d_nf(\partial_0,\partial_1)&=\lambda_1^n\partial_1f(\partial_0-n,\partial_1),\\
		a_nf(\partial_0,\partial_1)&=\lambda_1^n\lambda_2(\beta \partial_1+\epsilon)f(\partial_0-n,\partial_1-1),\\
		b_nf(\partial_0,\partial_1)&=\lambda_1^n\lambda_2^{-1}f(\partial_0-n,\partial_1+1),\\
		c_nf(\partial_0,\partial_1)&=-\beta \lambda_1^nf(\partial_0,\partial_1),
	\end{split}
\end{equation}
where $f(\partial_0,\partial_1)\in\C[\partial_0,\partial_1]$.
The $U(L)$-module $\mathcal{F}_{\alpha\beta}(\Omega(\boldsymbol{\lambda}),\C_\epsilon)$ is always simple, as proved in Proposition 3.3 of \cite{CX}.
\end{ex}
We have some partial results.
For $a\in\C$, the space $M_a=\bigoplus_{n_1\in\Z}\C x_1^{n_1}$ is an $\mathcal{R}_1$-module, where $x_1^{\pm1}$ acts as multiplication and $\partial_{1}x_1^{n_1}=(a+n_1)x_1^{n_1}$.
The following results can be found in Theorem 2.3 of \cite{GHL}.
\begin{thm}
Any simple $\partial_1$-torsion $\mathcal{R}_2$-module is isomorphic to one of $P_0\otimes M_a$, where $P_0$ is a simple $\mathcal{R}_0$-module and $a\in\C$.
\end{thm}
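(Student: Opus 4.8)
The plan is to study a simple $\partial_1$-torsion $\mathcal{R}_2$-module $M$ via the generalized eigenspace decomposition of the Euler operator $\partial_1=x_1\partial_{x_1}$, and then to reconstruct $M$ explicitly as a tensor module. Since $M$ is $\partial_1$-torsion, it is a union of finite-dimensional $\C[\partial_1]$-submodules, so primary decomposition gives $M=\bigoplus_{\gamma\in\C}M^{(\gamma)}$ with $M^{(\gamma)}=\{v\in M:(\partial_1-\gamma)^kv=0,\ k\gg0\}$. Because $x_1$ is invertible in $\mathcal{R}_2$ and $[\partial_1,x_1^{\pm1}]=\pm x_1^{\pm1}$, the operators $x_1^{\pm1}$ restrict to mutually inverse maps $M^{(\gamma)}\to M^{(\gamma\pm1)}$, whereas $x_0^{\pm1}$ and $\partial_{x_0}$ commute with $\partial_1$ and hence preserve each $M^{(\gamma)}$. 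Consequently, for any $\gamma_0$ with $M^{(\gamma_0)}\ne0$, the subspace $\bigoplus_{n\in\Z}M^{(\gamma_0+n)}$ is $\mathcal{R}_2$-stable (note that $\partial_{x_1}=x_1^{-1}\partial_1$ moves it into an adjacent summand), nonzero, hence all of $M$. Fix such a $\gamma_0$ and call it $a$; then $M=\bigoplus_{n\in\Z}M^{(a+n)}$, every summand is nonzero, and $x_1^n$ is an $\mathcal{R}_0$-module isomorphism $P_0:=M^{(a)}\xrightarrow{\ \sim\ }M^{(a+n)}$, where $\mathcal{R}_0=\C[x_0^{\pm1},\partial_{x_0}]$.

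The second step is to show that $P_0$ is simple over the subalgebra $\mathcal{A}:=\mathcal{R}_0\otimes\C[\partial_1]\subseteq\mathcal{R}_2$, which stabilises $P_0$. If $0\ne N\subseteq P_0$ is an $\mathcal{A}$-submodule, then $\bigoplus_{n\in\Z}x_1^nN$ is $\mathcal{R}_2$-stable: closure under $x_1^{\pm1}$ is clear, closure under $\mathcal{R}_0$ follows from $[\mathcal{R}_0,x_1]=0$ and $\mathcal{R}_0N\subseteq N$, and closure under $\partial_{x_1}$ follows from $\partial_{x_1}x_1^nv=x_1^{n-1}(\partial_1+n)v$ together with $\partial_1N\subseteq N$. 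Simplicity of $M$ gives $\bigoplus_nx_1^nN=M$, and intersecting with $P_0$ yields $N=P_0$. Now the genuine eigenspace $P_0^{\circ}:=\ker\big((\partial_1-a)|_{P_0}\big)$ is nonzero (since $\partial_1-a$ is locally nilpotent on $P_0$) and is an $\mathcal{A}$-submodule (since $\mathcal{R}_0$ commutes with $\partial_1$), so $P_0^{\circ}=P_0$: that is, $\partial_1$ acts on $P_0$ as the scalar $a$. Therefore the $\mathcal{A}$-action on $P_0$ is just its $\mathcal{R}_0$-action, and $P_0$ is a simple $\mathcal{R}_0$-module.

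Finally I would produce the isomorphism. Let $\mathsf{M}_a=\bigoplus_{n\in\Z}\C x_1^n$ be the $\mathcal{R}_1$-module from the statement. The map $\Phi\colon P_0\otimes\mathsf{M}_a\to M$, $p\otimes x_1^n\mapsto x_1^np$, is a linear bijection because $x_1^n\colon P_0\to M^{(a+n)}$ is bijective and $M=\bigoplus_nM^{(a+n)}$. It is $\mathcal{R}_2$-linear: compatibility with $x_1^{\pm1}$ is immediate, compatibility with $\mathcal{R}_0$ uses $[\mathcal{R}_0,x_1]=0$, and for $\partial_1$ one has $\partial_1(p\otimes x_1^n)=(a+n)p\otimes x_1^n\mapsto(a+n)x_1^np=x_1^n(\partial_1+n)p=\partial_1(x_1^np)$ using $\partial_1p=ap$; the case $\partial_{x_1}=x_1^{-1}\partial_1$ then follows. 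Hence $M\cong P_0\otimes\mathsf{M}_a$ with $P_0$ a simple $\mathcal{R}_0$-module and $a\in\C$, as claimed.

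I expect the main obstacle to be the second step: to descend from simplicity of $M$ to simplicity of the single generalized eigenspace $P_0$, one must keep the factor $\C[\partial_1]$ inside the algebra, because otherwise the induced subspace $\bigoplus_nx_1^nN$ need not be $\partial_{x_1}$-stable; only afterwards can one force $\partial_1$ to act as a scalar on $P_0$ and drop that factor. (If the isomorphism classes are also wanted: $\mathsf{M}_a\cong\mathsf{M}_{a+1}$, $a$ is otherwise determined by the $\partial_1$-spectrum of $M$, and $P_0$ is recovered as the corresponding eigenspace, so $P_0\otimes\mathsf{M}_a\cong Q_0\otimes\mathsf{M}_b$ iff $a-b\in\Z$ and $P_0\cong Q_0$ over $\mathcal{R}_0$.)
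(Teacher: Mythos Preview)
Your argument is correct. The paper itself does not prove this theorem: it is quoted verbatim as Theorem~2.3 of \cite{GHL} and used as a black box, so there is no in-paper proof to compare against. What you have written is a clean, self-contained proof along the standard lines one would expect for such a classification: generalized $\partial_1$-eigenspace decomposition, reduction of the support to a single coset $a+\Z$ by simplicity, descent of simplicity to the $a$-eigenspace $P_0$ via the auxiliary algebra $\mathcal{R}_0\otimes\C[\partial_1]$, the observation that the genuine eigenspace is a nonzero submodule forcing $\partial_1$ to act as the scalar $a$ on $P_0$, and finally the explicit isomorphism $\Phi$.

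The only place that deserves care is exactly the one you flag yourself: in the second step you must keep $\C[\partial_1]$ in the algebra acting on $P_0$, because $\partial_{x_1}x_1^n v = x_1^{n-1}(\partial_1+n)v$ requires $\partial_1 N\subseteq N$ to conclude stability; you handle this correctly. Your parenthetical remark on isomorphism classes ($M_a\cong M_{a'}$ iff $a-a'\in\Z$, and $P_0$ is determined up to $\mathcal{R}_0$-isomorphism) is also right, though not needed for the statement as given.
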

In the case $P=P_0\otimes M_a$, the actions of $U(L)$ on $\mathcal{F}_{\alpha\beta}(P_0\otimes M_a,\C_\epsilon)$ are as follows:
\begin{equation*}
	\begin{split}
		L_n(p_0\otimes x_1^{n_1})&=x_0^n\partial_0p_0\otimes x_1^{n_1}+n\alpha x_0^np_0\otimes x_1^{n_1},\\
		d_n(p_0\otimes x_1^{n_1})&=x_0^np_0\otimes(a+n_1)x_1^{n_1},\\
		a_n(p_0\otimes x_1^{n_1})&=x_0^np_0\otimes(\beta a+\beta n_1+\epsilon)x_1^{n_1+1},\\
		b_n(p_0\otimes x_1^{n_1})&=x_0^np_0\otimes x_1^{n_1-1},\\
		c_n(p_0\otimes x_1^{n_1})&=-\beta x_0^np_0\otimes x_1^{n_1},
	\end{split}
\end{equation*}
where $p_0\otimes x_1^{n_1}\in P_0\otimes M_a$.
The following statements are straightforward.
\begin{prop}
The module $\mathcal{F}_{\alpha\beta}(P_0\otimes M_a,\C_\epsilon)$ is simple if and only if $\beta a+\beta n+\epsilon\ne0$ for any $n\in\Z$.
\end{prop}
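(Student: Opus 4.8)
The plan is to split into the two implications and exploit the $\Z$-grading
\[
\mathcal{F}_{\alpha\beta}(P_0\otimes M_a,\C_\epsilon)=\bigoplus_{n_1\in\Z}P_0\otimes x_1^{n_1}
\]
by the power of $x_1$. Reading off the displayed action formulas, $L_n$, $d_n$, $c_n$ preserve each graded piece, $b_n$ lowers the degree by one, and $a_n$ raises it by one but carries the scalar factor $\beta a+\beta n_1+\epsilon$; this factor is exactly what governs simplicity.

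For the ``only if'' direction I would assume the condition fails, so $\beta a+\beta k+\epsilon=0$ for some $k\in\Z$, and verify directly that $N=\bigoplus_{n_1\le k}P_0\otimes x_1^{n_1}$ is a submodule: it is visibly stable under $L_n,d_n,c_n$ (which preserve degrees) and under $b_n$ (which decreases them), and for $a_n$ the only potential escape is from the top piece $n_1=k$, where the scalar $\beta a+\beta k+\epsilon$ vanishes, so $a_nN\subseteq N$. Since $N$ is nonzero and misses $P_0\otimes x_1^{k+1}$, the module is not simple.

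For the ``if'' direction, assume $\beta a+\beta n+\epsilon\ne0$ for all $n\in\Z$, let $N$ be a nonzero submodule, and fix $0\ne w\in N$ (a finite sum of graded components). First I would isolate one graded component: $d_0$ acts on $P_0\otimes x_1^{n_1}$ as the scalar $a+n_1$, and these scalars are pairwise distinct, so a Lagrange-interpolation polynomial in $d_0$ applied to $w$ yields a nonzero element $p\otimes x_1^{k}\in N$ with $p\in P_0$. Second, I would act inside that component: on $P_0\otimes x_1^{k}$ the operators $c_n$ ($n\in\Z$) act through the $P_0$-factor as $-\beta x_0^n$ and $L_0$ acts as $\partial_0=x_0\partial_{x_0}$, and since $x_0^{-1}\partial_0=\partial_{x_0}$, the subalgebra of $\mathrm{End}(P_0)$ these generate contains all of $\mathcal{R}_0$; hence by Lemma~\ref{density} (density for the simple $\mathcal{R}_0$-module $P_0$) we get $\mathcal{R}_0\cdot p=P_0$, i.e.\ $P_0\otimes x_1^{k}\subseteq N$. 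Third, I would spread across all degrees: $b_0$ maps $P_0\otimes x_1^{n_1}$ onto $P_0\otimes x_1^{n_1-1}$, and $a_0$ maps $P_0\otimes x_1^{n_1}$ onto $(\beta a+\beta n_1+\epsilon)(P_0\otimes x_1^{n_1+1})=P_0\otimes x_1^{n_1+1}$ precisely because the scalar is nonzero, so every graded piece lies in $N$ and $N$ is the whole module.

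The step that needs genuine care---the main obstacle---is the middle one in the sufficiency argument: one must check that the operators available inside a single graded component really generate all of $\mathcal{R}_0$ acting on $P_0$, so that the density lemma applies, and that there are no hidden relations in $U(L)$ obstructing the $d_0$-interpolation. The rest is bookkeeping with the explicit formulas.
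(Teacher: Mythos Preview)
Your argument is correct. The paper itself supplies no proof at all---it simply declares the proposition ``straightforward'' and moves on---so there is nothing to compare your approach against beyond noting that your grading analysis, the construction of the proper submodule $N=\bigoplus_{n_1\le k}P_0\otimes x_1^{n_1}$ in the necessity direction, and the three-step sufficiency argument (isolate a graded component via a polynomial in $d_0$, recover all of $P_0$ inside that component via $c_n$ and $L_0$, then spread with $a_0$ and $b_0$) are exactly the kind of direct verification the authors presumably had in mind.

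Two minor remarks. First, in the middle step you invoke Lemma~\ref{density}, but you only need the weaker statement that a simple $\mathcal{R}_0$-module is generated by any nonzero element; density is overkill here, though not wrong. Second, your parenthetical worry about ``hidden relations in $U(L)$ obstructing the $d_0$-interpolation'' is unfounded and you can drop it: $d_0$ acts on each graded piece $P_0\otimes x_1^{n_1}$ as the genuine scalar $a+n_1$, so any polynomial in $d_0$ acts as the corresponding scalar polynomial, and Lagrange interpolation goes through without subtlety.
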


\section{Simple $U(B)$-free modules of rank 1}
\label{33}
In this section, we will classify all the simple $U(L)$-modules that are free $U(B)$-modules of rank 1.

First, we present the following observations.
Let $\mathcal{D}=\C[t,\partial_t]$ be the algebra of differential operators. 
For $\alpha,\gamma\in\C$, $\beta\in\C^*$ and $g(t)\in\C[t]$, define a linear map as follows:
\begin{align*}
		\phi_{\alpha\beta\gamma g}:U(L)&\to\mathcal{R}_0\otimes\mathcal{D}\\
		L_n&\mapsto x_0^n(\partial_0+n\alpha)\otimes1,\\
		d_n&\mapsto x_0^n\otimes\left(\beta^{-1}tg(t)+\beta^{-1}\gamma+t \partial_{t}\right),\\
		a_n&\mapsto x_0^n\otimes t ,\\
		b_n&\mapsto x_0^n\otimes\left(g(t)+\beta\partial_t\right),\\
		c_n&\mapsto-\beta x_0^n\otimes1.
\end{align*} 
Then the following statements hold.
\begin{prop}
The map $\phi_{\alpha\beta\gamma g}$ is a surjective homomorphism of associative algebras.
\end{prop}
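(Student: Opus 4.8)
The plan is to establish the statement in two independent parts: first that $\phi_{\alpha\beta\gamma g}$ preserves the Lie brackets of $L$, so that the universal property of $U(L)$ promotes it to an algebra homomorphism on $U(L)$, and second that its image is all of $\mathcal{R}_0\otimes\mathcal{D}$.

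For the homomorphism part I would run through the defining relations of $L$ one family at a time. The relations $[L_m,L_n]=(n-m)L_{m+n}$ and $[L_m,x_n]=nx_{m+n}$ for $x\in\{a,b,c,d\}$ are handled exactly as in Proposition \ref{alghomo}: the $\mathcal{R}_0$-components are the same as in the $\varphi_{\alpha\beta}$ case and the $\mathcal{D}$-components are merely multiplied by the commuting factors $x_0^n$, so one only needs $[x_0^n\partial_0,x_0^m]=mx_0^{n+m}$. For the $\widehat{H_4}$-relations the $x_0$-factors simply multiply, and the content collapses to three identities inside $\mathcal{D}=\C[t,\partial_t]$, namely
\[
[t,\;g(t)+\beta\partial_t]=-\beta,\qquad [\beta^{-1}tg(t)+\beta^{-1}\gamma+t\partial_t,\;t]=t,
\]
\[
[\beta^{-1}tg(t)+\beta^{-1}\gamma+t\partial_t,\;g(t)+\beta\partial_t]=-(g(t)+\beta\partial_t),
\]
which give $[a_m,b_n]=c_{m+n}$, $[d_m,a_n]=a_{m+n}$ and $[d_m,b_n]=-b_{m+n}$ respectively. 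The last identity is the only delicate point, and I expect its verification to be the main (modest) obstacle of the argument: when the bracket is expanded, $[\beta^{-1}tg(t),\beta\partial_t]$ contributes $-(g(t)+tg'(t))$ while $[t\partial_t,g(t)]$ contributes $tg'(t)$, so the two $g'$-terms cancel and one is left with $-g(t)$ together with $-\beta\partial_t$ coming from $[t\partial_t,\beta\partial_t]$. This cancellation is precisely what forces the coefficient $\beta^{-1}$ in front of $tg(t)$ in the formula for $d_n$; any other scaling would leave a stray multiple of $g(t)$.

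For surjectivity I would exhibit a generating set of $\mathcal{R}_0\otimes\mathcal{D}$ inside the image. From $c_{\pm1}$ we obtain $x_0^{\pm1}\otimes1$ up to the scalar $-\beta$, and from $L_0$ we obtain $\partial_0\otimes1$; since $\partial_{x_0}=x_0^{-1}\partial_0$ and $\mathcal{R}_0=\C[x_0^{\pm1},\partial_{x_0}]$, this already yields $\mathcal{R}_0\otimes1\subset\mathrm{Im}(\phi_{\alpha\beta\gamma g})$. From $a_0$ we obtain $1\otimes t$, hence $1\otimes g(t)$ as a polynomial in it, and then $b_0\mapsto 1\otimes(g(t)+\beta\partial_t)$ yields $1\otimes\partial_t$; so $1\otimes\mathcal{D}\subset\mathrm{Im}(\phi_{\alpha\beta\gamma g})$. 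Since $\mathcal{R}_0\otimes\mathcal{D}$ is generated as an algebra by $\mathcal{R}_0\otimes1$ together with $1\otimes\mathcal{D}$, surjectivity follows, and no further difficulty is anticipated here.
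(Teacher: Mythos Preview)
Your proposal is correct and follows essentially the same route as the paper: both reduce the $[L_m,x_n]$-relations to Proposition~\ref{alghomo}, both verify the remaining $\widehat{H_4}$-relations via commutator identities in $\mathcal{D}$ (the paper records the general formulas $[t\partial_t,f(t)]=tf'(t)$, $[\partial_t,f(t)]=f'(t)$, $[t\partial_t,\partial_t]=-\partial_t$ rather than your three specific brackets, but the content is the same), and both establish surjectivity by exhibiting $x_0^{\pm1}\otimes1$, $\partial_0\otimes1$, $1\otimes t$, $1\otimes\partial_t$ in the image via $c_{\pm1}$, $L_0$, $a_0$, and $b_0-g(a_0)$. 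Your version is somewhat more explicit about the $g'$-cancellation in the $[d_m,b_n]$-check, which the paper leaves to the reader.
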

\begin{proof}
Similar to Proposition \ref{alghomo}, we have
\[
[\phi_{\alpha\beta\gamma g}(L_n),\phi_{\alpha\beta\gamma g}(x_m)]=m\phi_{\alpha\beta\gamma g}(x_{n+m}),
\]
where $x\in\left\{a,b,c,d\right\}$.
By direct computations, for $f(t)\in\C[t]$, the following identities hold in the algebra $\mathcal{D}$:
\begin{align*}
[t\partial_{t},f(t)]&=tf'(t),\\
[\partial_{t},f(t)]&=f'(t),\\
[t\partial_{t},\partial_{t}]&=-\partial_{t}.
\end{align*}
Hence the map $\phi_{\alpha\beta\gamma g}$ is an algebra homomorphism.

We have the following equations:
\begin{align*}
	x_0^{\pm1}\otimes1&=-\beta^{-1}\phi_{\alpha\beta\gamma g}(c_{\pm1}),\\
	\partial_0\otimes1&=\phi_{\alpha\beta\gamma g}(L_0),\\
	1\otimes t&=\phi_{\alpha\beta\gamma g}(a_0),\\
	1\otimes\partial_{t}&=\beta^{-1}\phi_{\alpha\beta\gamma g}\left(b_0-g(a_0)\right).
\end{align*}
The tensor product algebra ${R}_0\otimes\mathcal{D}$ is generated by $\left\{x_0^{\pm1}\otimes1,\partial_0\otimes1,1\otimes t,1\otimes\partial_{t}\right\}$.
It follows that $\phi_{\alpha\beta\gamma g}$ is surjective.
\end{proof}
\begin{cor}
	Every simple ${R}_0\otimes\mathcal{D}$-module can be lifted to a simple $U(L)$-module.
\end{cor}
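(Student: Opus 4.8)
The plan is to deduce the corollary directly from the preceding proposition: since $\phi_{\alpha\beta\gamma g}\colon U(L)\to \mathcal{R}_0\otimes\mathcal{D}$ is a surjective homomorphism of associative algebras, any $\mathcal{R}_0\otimes\mathcal{D}$-module $M$ becomes a $U(L)$-module by pullback along $\phi_{\alpha\beta\gamma g}$, i.e.\ by setting $u\cdot m=\phi_{\alpha\beta\gamma g}(u)m$ for $u\in U(L)$ and $m\in M$. The only thing to check is that this pullback preserves simplicity. First I would observe that a $U(L)$-submodule $N\subseteq M$ is by definition stable under $\phi_{\alpha\beta\gamma g}(U(L))$, and by surjectivity $\phi_{\alpha\beta\gamma g}(U(L))=\mathcal{R}_0\otimes\mathcal{D}$, so $N$ is automatically an $\mathcal{R}_0\otimes\mathcal{D}$-submodule. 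Hence if $M$ is simple as an $\mathcal{R}_0\otimes\mathcal{D}$-module, its only $U(L)$-submodules are $0$ and $M$, so the pulled-back module is simple as a $U(L)$-module.

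There is one small point worth spelling out to make the statement non-vacuous: the pulled-back module should be nonzero and should be a genuine (unital) module. Since $\phi_{\alpha\beta\gamma g}$ is a homomorphism of associative algebras it sends $1\in U(L)$ to $1\in\mathcal{R}_0\otimes\mathcal{D}$ (this is implicit in ``homomorphism of associative algebras''), so the $U(L)$-action is unital, and $M\neq 0$ as an $\mathcal{R}_0\otimes\mathcal{D}$-module forces $M\neq 0$ as a $U(L)$-module. Thus ``simple'' is meant in the usual sense of a nonzero module with no proper nonzero submodules, and the argument above applies verbatim.

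I do not anticipate any real obstacle here; the corollary is a formal consequence of surjectivity, and the content of the section lies entirely in having constructed the surjection $\phi_{\alpha\beta\gamma g}$ in the proposition. If one wanted to be thorough, the only thing to note is that surjectivity (rather than mere existence of the homomorphism) is exactly what is needed: for a non-surjective homomorphism the image of a simple module can easily fail to be simple, whereas here the image of $U(L)$ is all of $\mathcal{R}_0\otimes\mathcal{D}$, so submodule lattices coincide. I would therefore present the proof in one or two sentences, citing the proposition for surjectivity and then invoking the lattice-of-submodules argument.
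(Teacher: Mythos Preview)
Your proposal is correct and matches the paper's approach: the paper states the corollary without proof, treating it as an immediate consequence of the surjectivity of $\phi_{\alpha\beta\gamma g}$ established in the preceding proposition, which is exactly the argument you spell out.
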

Next, we focus on a particular simple module.
Let us formulate it here.
For $\lambda\in\C^*$, the polynomial algebra $\C[s]_{\lambda}$ is a simple $\mathcal{R}_0$-module, where for $f(s)\in\C[s]_{\lambda}$, the $\mathcal{R}_0$-actions are as follows:
\[
x_0f(s)=\lambda f(s-1),\quad\quad \partial_0f(s)=sf(s).
\] 
The polynomial algebra $\C[t]$ is naturally a simple $\mathcal{D}$-module, where $t\in\mathcal{D}$ acts as multiplication while $\partial_{t}\in\mathcal{D}$ acts as a derivation.

Now, the tensor product $\C[s]_\lambda\otimes\C[t]$ admits the structure of a $U(L)$-module.
Denote by $\Omega(\alpha,\beta,\gamma,\lambda,g)=\C[s,t]\cong \C[s]_\lambda\otimes\C[t]$.
Then $\Omega(\alpha,\beta,\gamma,\lambda,g)$ is a $U(L)$-module, where for $n\in\Z$ and $f(s,t)\in\Omega(\alpha,\beta,\gamma,\lambda,g)$, the $U(L)$-actions are given as follows:
\begin{equation}\label{ac}
	\begin{split}
		 L_n f(s,t)&=\lambda^n(s+n\alpha)f(s-n,t),\\
		d_n f(s,t)&=\lambda^n\beta^{-1}\left(tg(t)+\gamma\right)f(s-n,t)+\lambda^n t\partial_t f(s-n,t),\\
		a_n f(s,t)&=\lambda^n t f(s-n,t),\\
		b_n f(s,t)&=\lambda^n g(t) f(s-n,t)+\lambda^n \beta \partial_t f(s-n,t),\\
		c_n f(s,t)&=-\lambda^n\beta f(s-n,t).
	\end{split}
\end{equation}  
\begin{prop}
	The following statements are valid.
	\begin{enumerate}[(1)]
		\item 
		$\Omega(\alpha,\beta,\gamma,\lambda,g)$ is a simple $U(L)$-module and is a free $U(B)$-module of rank 1.
		\item
		Suppose $\deg(g)=n\in\N$. Then $\Omega(\alpha,\beta,\gamma,\lambda,g)$ is a free $U(H)$-module of rank $\deg(g)+1$.  
	\end{enumerate}
\end{prop}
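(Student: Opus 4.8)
The plan is to first observe that both $B$ and $H$ are abelian, so $U(B)=\C[L_0,a_0]$ and $U(H)=\C[L_0,d_0]$ are polynomial rings in two commuting variables, and then to read off from \eqref{ac} (putting $n=0$) how these generators act on $\Omega:=\Omega(\alpha,\beta,\gamma,\lambda,g)=\C[s,t]$: the operator $L_0$ is multiplication by $s$, $a_0$ is multiplication by $t$, and $d_0$ is the operator $D:=\beta^{-1}\bigl(tg(t)+\gamma\bigr)+t\partial_t$ acting in the $t$-variable. The freeness over $U(B)$ in part (1) is then immediate: the $U(B)$-module map $U(B)\to\Omega$, $u\mapsto u\cdot 1$, sends $L_0^{i}a_0^{j}$ to $s^{i}t^{j}$, hence carries the monomial basis of $U(B)$ bijectively onto that of $\C[s,t]$; thus $\Omega\cong U(B)$ is free of rank $1$.

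For simplicity in part (1) I would use the previous proposition: since $\phi_{\alpha\beta\gamma g}$ is a surjective algebra homomorphism, the $U(L)$-submodules of $\Omega$ coincide with the $\mathcal{R}_0\otimes\mathcal{D}$-submodules of $\C[s]_\lambda\otimes\C[t]$, so it suffices to show this module is simple over $\mathcal{R}_0\otimes\mathcal{D}$. The factor $\C[t]$ is a simple $\mathcal{D}$-module. I claim $\C[s]_\lambda$ is a simple $\mathcal{R}_0$-module: the image of $\mathcal{R}_0$ in $\mathrm{End}(\C[s]_\lambda)$ contains multiplication by $s$ (namely $\partial_0=x_0\partial_{x_0}$) and the shift $\sigma\colon f(s)\mapsto f(s-1)$ (namely $\lambda^{-1}x_0$), and $(1-\sigma)$ strictly lowers the degree of any nonzero polynomial, so $(1-\sigma)^{\deg f}f$ is a nonzero constant, after which multiplication by $s$ recovers all of $\C[s]$. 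Finally, $\mathcal{R}_0\otimes\mathcal{D}$ has a countable basis, so the density argument of Section \ref{22} applies verbatim: for a nonzero submodule and $0\ne x=\sum_{i=1}^{m}p_i\otimes q_i$ in it with the $q_i\in\C[t]$ linearly independent, Lemma \ref{density} produces $b\in\mathcal{D}$ with $bq_1\ne 0$ and $bq_i=0$ for $i\ge2$, giving $p_1\otimes(bq_1)$ in the submodule, and then simplicity of each factor propagates this to all of $\C[s]_\lambda\otimes\C[t]$.

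For part (2), since $L_0$ and $d_0$ act on the two tensor factors of $\Omega=\C[s]\otimes\C[t]$ separately and the $\C[s]$-factor is free of rank $1$ over $\C[L_0]$, we have $\Omega\cong\C[L_0]\otimes_\C\C[t]$ as $U(H)$-modules, and it remains to show $\C[t]$ is free of rank $n+1$ over $\C[D]$, where $n=\deg(g)$. Writing $g(t)=\sum_{k=0}^{n}g_kt^{k}$ with $g_n\ne0$, one computes $D(t^{m})=\beta^{-1}g_n\,t^{m+n+1}+(\text{terms of degree}\le m+n)$, so $D$ raises degree by exactly $n+1$ with nonzero leading coefficient, whence $D^{j}(t^{r})$ has degree $r+j(n+1)$ and nonzero leading coefficient. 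As $(r,j)$ runs over $\{0,1,\dots,n\}\times\N$ the integers $r+j(n+1)$ exhaust $\N$ without repetition, so $\{D^{j}(t^{r})\}$ is triangular with respect to the monomial basis of $\C[t]$ ordered by degree, hence is itself a $\C$-basis; equivalently, $\bigoplus_{r=0}^{n}\C[D]\to\C[t]$, $(f_r)_r\mapsto\sum_{r}f_r(D)t^{r}$, is a bijection. Therefore $\Omega$ is free over $U(H)$ with basis $\{t^{r}:0\le r\le n\}$, of rank $\deg(g)+1$.

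The main obstacle is the simplicity assertion in part (1): that the external tensor product of the simple modules $\C[s]_\lambda$ and $\C[t]$ is simple over $\mathcal{R}_0\otimes\mathcal{D}$ is not formal and genuinely needs Lemma \ref{density} (together with the auxiliary simplicity of $\C[s]_\lambda$ over $\mathcal{R}_0$), exactly as in Section \ref{22}; everything else is degree bookkeeping. I would also flag that the hypothesis $\deg(g)\in\N$ (equivalently $g\ne0$) is essential in part (2): if $g=0$ then $D=\beta^{-1}\gamma+t\partial_t$ preserves degree and acts diagonalizably with pairwise distinct eigenvalues, so $\C[t]$ is a torsion, not free, $\C[D]$-module.
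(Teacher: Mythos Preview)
Your proof is correct. For part~(2) you and the paper run on the same engine: the key computation is that $d_0$ raises the $t$-degree by exactly $n+1$ with nonzero leading coefficient (the paper records this as $\deg(d_0^mf)=\deg f+m(n+1)$). The paper then treats spanning and independence separately---spanning of $\{1,\dots,t^n\}$ by an induction on $s$ using the relation $t^{n+1+s}=(\beta d_0-s-\gamma)t^s-\sum_{k<n}g_kt^{k+s+1}$, and independence by a degree-collision contradiction---whereas your triangularity formulation (the degrees $r+j(n+1)$ biject with $\N$) packages both at once. Your closing remark that the statement fails for $g=0$ is a sharp observation the paper leaves implicit.

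For the simplicity in part~(1) you take a genuinely different route. The paper argues directly inside $\Omega$: from any nonzero element of a submodule $N$, the action of $c_1$ (which sends $f(s,t)\mapsto-\lambda\beta f(s-1,t)$) and subtraction produce a nonzero element of $\C[t]\cap N$; then the operator $\beta^{-1}(b_0-g(a_0))$ acts as $\partial_t$, forcing $1\in N$, and the already-proved $U(B)$-freeness gives $N=\Omega$. Your argument instead leverages the surjectivity of $\phi_{\alpha\beta\gamma g}$ to reduce to simplicity of $\C[s]_\lambda\otimes\C[t]$ over $\mathcal{R}_0\otimes\mathcal{D}$, checks each tensor factor is simple, and then invokes Lemma~\ref{density} to handle the external tensor product. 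Both are valid; the paper's is shorter and avoids the density machinery, while yours is more conceptual, explains in one stroke why \emph{every} simple $\mathcal{R}_0\otimes\mathcal{D}$-module lifts to a simple $U(L)$-module (the Corollary preceding this Proposition), and mirrors the method of Section~\ref{22}.
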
 
\begin{proof}
\begin{enumerate}[(1)]
	\item 
	The element $1\in \Omega(\alpha,\beta,\gamma,\lambda,g)$ is a $U(B)$-basis, which implies $\Omega(\alpha,\beta,\gamma,\lambda,g)$ is a free $U(B)$-module of rank 1.
	For the simplicity, suppose $N$ is a submodule of $\Omega(\alpha,\beta,\gamma,\lambda,g)$. Then, by the action of $c_1\in L$, we can find some $0\ne f(t)\in\C[t]\cap N$. 
	Combine the actions of $a_0$ and $b_0$, we have $\partial_tf(t)=\beta^{-1}\left(b_0-g(a_0)\right)f(t)$. Hence we have $1\in N$ and $N=\Omega(\alpha,\beta,\gamma,\lambda,g)$.
	\item 
	Let $\mathbb{B}=\left\{1,t,\ldots,t^n\right\}\subset\Omega(\alpha,\beta,\gamma,\lambda,g)$.
	Suppose $g(t)=\sum_{k=1}^{n}g_kt^k$, where $g_k\in\C$ and $g_n\ne0$.
	Then, for $s\in\N$, we have
	\begin{equation}
		\label{gene}
		t^{n+1+s}=(\beta d_0-s-\gamma)t^s-\sum_{k=0}^{n-1}g_kt^{k+1+s}.
	\end{equation}
	We prove by induction that $t^{n+1+s}\in\C[d_0]\mathbb{B}$ for all $s\in\N$. 
	The case $s=0$ follows from Equation \eqref{gene}.
	For $1\le s\le n$, Equation \eqref{gene} and the induction hypothesis show that
	\[
	t^{n+1+s}=\left((\beta d_0-s-\gamma)t^s-\sum_{k=0}^{n-s-1}g_kt^{k+1+s}\right)-\sum_{k=n-s}^{n-1}g_kt^{k+1+s}\in\C[d_0]\mathbb{B}.
	\] 
	Similarly, for $s\ge n+1$, we also have $t^{n+1+s}\in\C[d_0]\mathbb{B}$.
	Now, we conclude that $\Omega(\alpha,\beta,\gamma,\lambda,g)=U(H)\mathbb{B}$.
	
	Next, we prove $\mathbb{B}$ is $U(H)$-linearly independent.
	
	Assume first that there exist $h_0(d_0),\ldots,h_n(d_0)\in\C[d_0]$, which are not all equal to zero, such that
	\[
	\sum_{k=0}^{n}h_k(d_0)t^k=0.
	\]
	For $m\in\N$ and $0\ne f(t)\in\C[t]$, we have
	\begin{equation}
		\label{deg}
		\deg\left(d_0^mf(t)\right)=\deg\left(f(t)\right)+m(n+1).
	\end{equation}
	It follows that there exist $i\ne j$ such that $\deg\left(h_i(d_0)t^i\right)=\deg\left(h_j(d_0)t^j\right)$.
	Hence, from Equation \eqref{deg}, we have $\left|i-j\right|=(n+1)\left|\deg\left(h_i(d_0)\right)-\deg\left(h_j(d_0)\right)\right|$, which is a contradiction since $0<\left|i-j\right|\le n$.
	
	In general, for $0\le k\le n$, suppose
	$
	f_k=\sum_{i=0}^{m}L_0^ih_{ki}(d_0)\in U(H)
	$
	such that $\sum_{k=0}^{m}f_kt^k=0$, where $h_{ki}(d_0)\in\C[d_0]$.
	Then we have
	\[
	\sum_{i=0}^{m}s^i\left(\sum_{k=0}^{n}h_{ki}(d_0)t^k\right)=0.
	\]
	Note that $\sum_{k=0}^{n}h_{ki}(d_0)t^k\in\C[t]$, it follows that $f_k=0$ for all $0\le k\le n$.\qedhere
\end{enumerate}
\end{proof}
Furthermore, the following theorem holds, which is the \textbf{main result} of this section.
\begin{thm}
	Suppose $M$ is a simple $U(L)$-module and is a free $U(B)$-module of rank 1. Then $M$ is isomorphic to $\Omega(\alpha,\beta,\gamma,\lambda,g)$.
\end{thm}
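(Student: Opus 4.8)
The plan is to start from a $U(L)$-module $M$ that is free of rank $1$ over $U(B)=\C L_0\oplus\C a_0$, fix a generator $v\in M$, so that $M=\C[L_0,a_0]v$, and then determine how the remaining generators $L_n,d_n,b_n,c_n$ act. First I would analyze the central-like behaviour of the $c_n$'s: since $[a_m,b_n]=c_{m+n}$ and the $c_n$ commute with everything except nothing of relevance here (note $[L_m,c_n]=nc_{m+n}$ is the only nonzero bracket involving $c$ besides $[a,b]$), I expect each $c_n$ to act as a polynomial in $L_0,a_0$; using $[L_0,c_n]=nc_n$ one pins down the $L_0$-weight of $c_n v$, and comparing with the weight of $v$ forces $c_n v = (\text{scalar})\,\lambda^n v$ up to a shift, which identifies the parameter $\lambda\in\C^*$ and the normalization $c_0 v=-\beta v$ for some $\beta\in\C^*$ (one must check $\beta\ne 0$, i.e. that $c_0$ acts by a nonzero scalar — if it acted by $0$ the module would not be simple, since then $b_0$ would generate a proper submodule structure; this needs a short argument).

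Next I would use the action of $L_n$. From $[L_m,L_n]=(n-m)L_{m+n}$ and freeness over $\C[L_0]$, a now-standard argument (as in the rank-one classification for the Witt algebra, the reference \cite{N}) shows $L_n v = \lambda^n(L_0+n\alpha)v$ for some $\alpha\in\C$, after possibly rescaling $v$; here the same $\lambda$ must appear because of the bracket $[L_m,c_n]=nc_{m+n}$ tying the two actions together. Then I would handle $a_n$: the bracket $[L_m,a_n]=na_{m+n}$ together with the already-known $L_n$-action, plus freeness in the $a_0$-variable, forces $a_n v = \lambda^n \cdot(\text{shift by }s\mapsto s-n\text{ in }L_0)\cdot a_0 v$, matching the formula $a_n f(s,t)=\lambda^n t f(s-n,t)$ once we set $t=a_0$ acting as multiplication. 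Finally $d_n$ and $b_n$ are recovered from $[d_m,a_n]=a_{m+n}$, $[a_m,b_n]=c_{m+n}$, and $[d_m,b_n]=-b_{m+n}$: writing $d_0 v$ and $b_0 v$ as polynomials in $L_0,a_0$, the relation $[a_0,b_0]=c_0=-\beta$ says $b_0$ acts as $\beta\partial_t+g(t)$ for some polynomial $g\in\C[t]$ (this is exactly where the functional parameter $g$ enters — it is the "constant of integration"), and then $d_0$ is forced to act as $\beta^{-1}(tg(t)+\gamma)+t\partial_t$ for some $\gamma\in\C$ from the remaining brackets. One then checks the general $d_n,b_n$ are the $\lambda^n$-scaled shifted versions, so $M\cong\Omega(\alpha,\beta,\gamma,\lambda,g)$.

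The main obstacle I anticipate is the step extracting the polynomial $g(t)$ and verifying consistency: a priori $b_0 v$ is some element $p(L_0,a_0)v$, and one must show the $L_0$-dependence is trivial (i.e. $p$ depends only on $a_0$) and that the $a_0$-dependence is forced to have the form $\beta\partial_t + g$. The cleanest route is to exploit $[L_n,b_m]=mb_{n+m}$ to kill the $L_0$-dependence and determine the $\lambda^n$ scaling, and $[d_0,a_0]=a_0$ together with $[a_0,b_0]=-\beta$ to pin down the differential-operator form in the $t=a_0$ variable; this is really a computation inside $\mathcal{D}=\C[t,\partial_t]$, using that any operator on $\C[t]$ with the right commutation relation with $t$ must be $\beta\partial_t$ plus a multiplication operator. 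A secondary subtlety is making sure all the scalars ($\lambda,\alpha,\beta,\gamma$) are globally consistent across all $n$ simultaneously — but once the $n=0,\pm1$ cases are settled, the Lie brackets $[L_1,\cdot]$ and $[L_{-1},\cdot]$ propagate everything, so no genuinely new information appears for $|n|\ge 2$.
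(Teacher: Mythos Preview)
Your overall strategy matches the paper's: fix a $U(B)$-generator $v$, write $x_n v$ as a polynomial in $(L_0,a_0)$ for each generator $x$, and then use the bracket relations to pin down those polynomials. Two steps in your outline, however, do not go through as written.

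First, the argument that $c_n v$ is a scalar multiple of $v$. You invoke $[L_0,c_n]=nc_n$ to ``pin down the $L_0$-weight'', but this relation only tells you that $c_n$ commutes past $L_0$ with a shift $L_0\mapsto L_0-n$; it places no constraint on the polynomial $C_n(L_0,a_0)$ defined by $c_n v = C_n(L_0,a_0)v$. The paper instead uses $[a_1,c_n]=0$ to force $C_n(L_0-1,a_0)=C_n(L_0,a_0)$, killing the $L_0$-dependence, and then---crucially---uses $[b_0,c_0]=0$, which after expanding $b_0$ on $C_0(a_0)v$ yields $C_0(a_0)\partial_{a_0}C_0(a_0)=0$, hence $C_0\in\C$. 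Without the $[b_m,c_n]=0$ relation you have no way to rule out $C_0$ being a nonconstant polynomial in $a_0$.

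Second, the claim that the ``standard argument'' gives $L_n v=\lambda^n(L_0+n\alpha)v$ with $\alpha\in\C$. Over the Witt algebra alone that is true, but here $B=\C L_0\oplus\C a_0$, and the relevant classification (the paper cites \cite{HC}) only yields $L_n v=\lambda^n(L_0+np(a_0))v$ for some \emph{polynomial} $p(a_0)$. Reducing $p$ to a constant is not automatic: in the paper it comes out of the relation $[L_m,b_n]=nb_{m+n}$, specifically by setting $m=1,n=0$, which forces $\partial_{a_0}p(a_0)=0$. Your proposed route of first fixing $\alpha\in\C$ and then using $[L_n,b_m]$ to kill the $L_0$-dependence of $B_m$ has the logic inverted; the paper gets $B_m,D_m\in\C[a_0]$ from $[b_m,c_n]=0$ and $[c_m,d_n]=0$ (which is immediate once $C_n\in\C^*$), and only then extracts $p\in\C$.

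The remaining pieces of your sketch---the Case $\beta=0$ leading to a proper submodule, and the identification of $b_0$ with $\beta\partial_t+g(t)$ and of $d_0$ via $[b_0,d_0]=b_0$---are in line with the paper's argument, though the paper uses $[b_0,d_0]$ to obtain $a_0B_0(a_0)-\beta D_0(a_0)=-\gamma\in\C$ directly rather than phrasing it as a differential-operator computation.
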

\begin{proof}
	Suppose $v\in M$ is a $U(B)$-basis. Then, from Theorem 3.2 in \cite{HC}, we know that 
	$$L_n v=\lambda^n\left(L_0+np(a_0)\right)v\quad\mbox{and}\quad a_n v=(\lambda^na_0)v$$ 
	for some $\lambda\in\C^*$ and $p(a_0)\in\C[a_0]$. 
	
	Assume that $b_nv=B_n(L_0,a_0)v$, $c_n v=C_n(L_0,a_0)v$ and $d_nv=D_n(L_0,a_0)v$. According to the definition of the algebra $L$ and by induction, for $n\in\Z$, $k\in\N$ and $x_n\in \left \{ L_n,a_n,b_n,c_n,d_n \right \}$, one can verify that the following identities hold in $U(L)$:
	\begin{align*}
		x_n L_0^k=(L_0-n)^k x_n, \quad\quad
		b_n a_0^k=a_0^k b_n - k a_0^{k-1}c_n,\quad\quad
		d_n a_0^k=a_0^k d_n + k a_0^{k-1}a_n.
	\end{align*}   
	It follows that for any $f(L_0,a_0)\in U(B)$, the following equations are valid.
	\begin{align*}
		b_nf(L_0,a_0)v&=B_n(L_0,a_0)f(L_0-n,a_0)v-C_n(L_0,a_0)\partial_{a_0}f(L_0-n,a_0)v,\\
		c_nf(L_0,a_0)v&=C_n(L_0,a_0)f(L_0-n,a_0)v,\\
		d_nf(L_0,a_0)v&=D_n(L_0,a_0)f(L_0-n,a_0)v+\lambda^na_0\partial_{a_0}f(L_0-n,a_0)v.
	\end{align*}
	\begin{claim}
		$C_n(L_0,a_0)\in\C$ for all $n\in\Z$.
		
		In fact, by $[a_1,c_n]=0$, we have $C_n(L_0-1,a_0)=C_n(L_0,a_0)$ for all $n\in\Z$. Hence $C_n(L_0,a_0)=C_n(a_0)\in\C[a_0]$.
		From the equation $[L_m,c_n]=nc_{m+n}$, we deduce that $C_n(a_0)=\lambda^n C_0(a_0)$. 
		The equation $[b_m,c_n]=0$ implies
		\begin{equation}\label{1}
			B_m(L_0-n,a_0)C_n(a_0)=B_m(L_0,a_0)C_n(a_0)-C_m(a_0)\partial_{a_0} C_n(a_0).
		\end{equation} 
		Taking $m=n=0$ in Equation \eqref{1}, we have $C_0(a_0)\partial_{a_0} C_0(a_0)=0$.
		It follows that $C_0(a_0)\in\C$ and thus $C_n(a_0)=\lambda^n C_0(a_0)\in\C$.
		The claim is correct.
	\end{claim}
	Below, we use $C_n$ to denote $C_n(L_0,a_0)\in\C$. Two different cases need to be considered.
	\begin{case}
		$C_0=-\beta\in\C^*$.
		
		In this case, from Equation \eqref{1} we know that $B_m(L_0-n,a_0)=B_m(L_0,a_0)$, and thus $B_m(L_0,a_0)=B_m(a_0)\in\C[a_0]$.
		In the same way, we deduce that $D_m(L_0,a_0)=D_m(a_0)\in\C[a_0]$ by the equation $[c_m,d_n]=0$. The equation $[L_m,b_n]=nb_{m+n}$ suggests
		\begin{equation}\label{2}
			nB_{m+n}(a_0)=n\lambda^m B_n(a_0)+m\lambda^n \partial_{a_0} p(a_0).
		\end{equation} 
		Taking $m=1$ and $n=0$ in Equation \eqref{2}, we have $p(a_0)=\a\in\C$, and hence $B_m(a_0)=\lambda^m B_0(a_0)$. Similarly, using the equation $[L_m,d_n]=nd_{m+n}$, we obtain $D_m(a_0)=\lambda^m D_0(a_0)$. The equation $[b_0,d_0]=b_0$ indicates
		\begin{align*}
			0=a_0\partial_{a_0}B_0(a_0)+B_0(a_0)-\beta\partial_{a_0}D_0(a_0)
			=\partial_{a_0}\left(a_0B_0(a_0)-\beta D_0(a_0)\right).
		\end{align*}
		It follows that $a_0B_0(a_0)-\beta D_0(a_0)=-\gamma\in\C$. 
		We can now see that $M$ is isomorphic to $\Omega(\alpha,\beta,\gamma,\lambda,g)$, where $g(t)=B_0(t)$. 
	\end{case}
	\begin{case}
		$C_0=0$.
		
		In this case, $C_n=\lambda^n C_0=0$. By $[b_m,d_n]=b_{m+n}$, we deduce that
		\begin{equation}\label{3}
			\begin{split}
			B_{m+n}(L_0,a_0)=B&_m(L_0,a_0)D_n(L_0-m,a_0)-B_m(L_0-n,a_0)D_n(L_0,a_0)\\
			&-\lambda^n a_0\partial_{a_0}B_m(L_0-n,a_0).
			\end{split}
		\end{equation}
		Taking $m=n=0$ in Equation \eqref{3}, we have $B_0(L_0,a_0)+a_0\partial{a_0}B_0(L_0,a_0)=0$. 
		It follows that $B_0(L_0,a_0)=0$. Taking $m=0$ in Equation \eqref{3}, we see that $B_n(L_0,a_0)=0$ for all $n\in\Z$.
		Now, the subspace spanned by $\left\{L_0^ia_0^nm\mid i\in\N, n\in\Z_{\ge1}\right\}$ is a proper submodule, in contradiction with $M$ being simple.\qedhere
	\end{case}
\end{proof}
\section{Tensor products of $U(B)$-free modules}
\label{44}
In this section, we study the tensor products of finitely many simple $U(L)$-modules constructed in \eqref{ac}.
First, we give necessary and sufficient conditions for them to be simple.
Then, we determine their isomorphism classes.
Before proceeding formally, let us introduce some notations and known results.
\begin{lemma}[Lemma 2 in \cite{TZ}]
	\label{crucial}
	Suppose $\alpha_1,\ldots,\alpha_m\in\C^*$, $s_1,\ldots,s_m\in\Z_{\ge1}$ and $s_1+\cdots+s_m=s$.
	For $n\in\Z$, $1\le t\le m$ and $s_1+\cdots+s_{t-1}+1\le k\le s_1+\cdots+s_t$, define $f_k(n)=n^{k-1-\sum_{j=1}^{t-1}s_j}\alpha_t^n$. Let $\mathfrak{R}=(y_{pq})$ be the $s\times s$ matrix with $y_{pq}=f_q(p-1)$, where $q=1,2,\ldots,s$ and $p=r+1,r+2,\ldots,r+s$ for some $r\ge0$. Then we have
	\begin{equation*}
		\det\mathfrak{R}=\prod_{j=1}^{m}(s_j-1)^{!!}\alpha_j^{{s_j(s_j+2r-1)}/{2}} \prod_{1\le i<j\le m}(\alpha_j-\alpha _i)^{s_is_j},
	\end{equation*}
	where $m^{!!}=m!\times(m-1)!\times\cdots\times 1!$ for $m\in\Z_{\ge 1}$ and $0^{!!}=1$.
\end{lemma}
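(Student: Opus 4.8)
The plan is to transform $\mathfrak{R}$ by elementary column operations into a classical \emph{confluent Vandermonde matrix}, whose determinant is well known, keeping track of the scalar factors produced along the way. First I would reduce to the case $r=0$. For fixed $t$, the $s_t$ columns of $\mathfrak{R}$ in the $t$-th block have $p$-th entry $(p-1)^{j}\alpha_t^{p-1}$ with $0\le j\le s_t-1$; pulling $\alpha_t^{r}$ out of each such column produces the global factor $\prod_{t}\alpha_t^{rs_t}$ and, after relabelling $i=p-r\in\{1,\dots,s\}$, leaves entries $(i+r-1)^{j}\alpha_t^{i-1}$. Within each block the two families $\{(i+r-1)^{j}\}_{0\le j\le s_t-1}$ and $\{(i-1)^{j}\}_{0\le j\le s_t-1}$ are bases of the space of polynomials in $i$ of degree $<s_t$ related by a unipotent change of basis, since $(i-1)^{j}=(i+r-1)^{j}+(\text{lower order in }i)$; the corresponding column operations have determinant $1$, so it suffices to prove
\[
\det\mathfrak{R}\big|_{r=0}=\prod_{j=1}^{m}(s_j-1)^{!!}\,\alpha_j^{s_j(s_j-1)/2}\prod_{1\le i<j\le m}(\alpha_j-\alpha_i)^{s_is_j}.
\]

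With $r=0$ the $t$-th block has $i$-th entry $(i-1)^{j}\alpha_t^{i-1}$. Next I would replace, inside each block, the monomials $n^{j}$ by the falling factorials $n(n-1)\cdots(n-j+1)$; since each falling factorial is monic of degree $j$, this is again a unitriangular column operation and does not change the determinant. Using the identity $n(n-1)\cdots(n-j+1)\,\alpha^{\,n-j}=\dfrac{d^{j}}{d\alpha^{j}}\alpha^{n}$, I would then pull $\alpha_t^{j}$ out of the $j$-th column of the $t$-th block; this contributes the factor $\prod_{t}\alpha_t^{0+1+\cdots+(s_t-1)}=\prod_{t}\alpha_t^{s_t(s_t-1)/2}$ and turns the $(i,(t,j))$-entry into $\dfrac{d^{j}}{d\alpha^{j}}\alpha^{\,i-1}\big|_{\alpha=\alpha_t}$. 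After transposing, this is precisely the $s\times s$ confluent Vandermonde matrix for the monomials $1,\alpha,\dots,\alpha^{s-1}$ with nodes $\alpha_1,\dots,\alpha_m$ of multiplicities $s_1,\dots,s_m$, rows grouped by node and ordered by derivative order.

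To finish, I would invoke the classical confluent Vandermonde determinant, namely that this matrix has determinant $\prod_{t}\bigl(\prod_{l=0}^{s_t-1}l!\bigr)\prod_{1\le i<j\le m}(\alpha_j-\alpha_i)^{s_is_j}=\prod_{t}(s_t-1)^{!!}\prod_{1\le i<j\le m}(\alpha_j-\alpha_i)^{s_is_j}$ (obtainable, e.g., by letting the nodes of an ordinary Vandermonde coalesce, or by induction on $\max_t s_t$). Multiplying together the three collected factors $\prod_t\alpha_t^{rs_t}$, $\prod_t\alpha_t^{s_t(s_t-1)/2}$ and this value, and using $rs_t+s_t(s_t-1)/2=s_t(s_t+2r-1)/2$, yields exactly the asserted formula for $\det\mathfrak{R}$.

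The argument is entirely elementary, so there is no serious obstacle; the one point requiring care is the bookkeeping of the column operations — in particular checking that both intra-block changes of basis (from $(i+r-1)^{j}$ to $(i-1)^{j}$, and from ordinary powers to falling factorials) really are unitriangular, hence determinant-preserving — together with the ordering conventions needed to identify the transposed matrix with the standard confluent Vandermonde matrix so that the classical formula applies verbatim. One could instead skip the reduction to $r=0$ and treat general $r$ directly, removing the common factor $\alpha^{r}$ from the monomials $\alpha^{r},\dots,\alpha^{r+s-1}$ appearing after the second step by a Leibniz-rule row operation, but the reduction above seems the cleanest route.
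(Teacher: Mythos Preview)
The paper does not give its own proof of this lemma: it is quoted verbatim as Lemma~2 of \cite{TZ} and used as a black box, so there is no argument in the paper to compare yours against.

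That said, your proposal is a correct and standard route. The reduction to $r=0$ via factoring $\alpha_t^{r}$ from each column of block~$t$ and the unitriangular change of basis from $\{(i+r-1)^{j}\}$ to $\{(i-1)^{j}\}$ is valid (both families span polynomials in $i$ of degree $<s_t$ with matching leading coefficients). Replacing powers by falling factorials is again unitriangular, and the identity $n^{(j)}\alpha^{n-j}=\dfrac{d^{j}}{d\alpha^{j}}\alpha^{n}$ then lands you exactly on the transpose of the confluent Vandermonde matrix, whose determinant is the classical $\prod_{t}\prod_{l=0}^{s_t-1}l!\prod_{i<j}(\alpha_j-\alpha_i)^{s_is_j}=\prod_{t}(s_t-1)^{!!}\prod_{i<j}(\alpha_j-\alpha_i)^{s_is_j}$. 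The only hypotheses you implicitly use are $\alpha_t\in\C^*$ (needed to extract $\alpha_t^{r}$ and $\alpha_t^{j}$) and these are given. Collecting the three scalar factors yields the stated formula; your bookkeeping of signs and ordering is consistent with the statement.
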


For $m\in\Z_{\ge 1}$ and $1\le k\le m$, suppose $\alpha_k,\gamma_k\in\C$, $\beta_k,\lambda_k\in\C^*$, and $g_k=g_k(t_k)\in\C[t_k]$.
As a vector space, we have $\Omega(\alpha_k,\beta_k,\gamma_k,\lambda_k,g_k)=\C[s_k,t_k]$.
Define the tensor product
\begin{equation*}
	\textbf{T}(\boldsymbol{\alpha},\boldsymbol{\beta},\boldsymbol{\gamma},\boldsymbol{\lambda},\boldsymbol{g})=\bigotimes_{k=1}^{m}\Omega(\alpha_k,\beta_k,\gamma_k,\lambda_k,g_k).
\end{equation*}

For a non-zero element
\begin{equation}\label{fffff}
	g=\sum_{(\textbf{p},\textbf{q})\in E}g_{(\textbf{p},\textbf{q})}s_1^{p_1}t_1^{q_1}\otimes \cdots \otimes  s_m^{p_m}t_m^{q_m}\in \textbf{T}(\boldsymbol{\alpha},\boldsymbol{\beta},\boldsymbol{\gamma},\boldsymbol{\lambda},\boldsymbol{g}),
\end{equation}
where $(\textbf{p},\textbf{q})=(p_1,\ldots,p_m,q_1,\ldots,q_m)\in\N^{2m}$, $ g_{(\textbf{p},\textbf{q})}\in\C^*$, and $E\subset\N^{2m}$ is a finite set, we use $\textbf{s}^{\textbf{p}}\textbf{t}^{\textbf{q}}$ to denote $s_1^{p_1}t_1^{q_1}\otimes \cdots \otimes  s_m^{p_m}t_m^{q_m}$.
Thus, the element in \eqref{fffff} can be written as 
\begin{equation*}
	g=\sum_{(\textbf{p},\textbf{q})\in E}g_{(\textbf{p},\textbf{q})} \textbf{s}^{\textbf{p}}\textbf{t}^{\textbf{q}}.
\end{equation*}
Define $P_k=\mathrm{max}\left \{ p_k \mid(\textbf{p},\textbf{q}) \in E\right \}$ and $E_k=\left\{(\textbf{p},\textbf{q}) \in E\mid p_k=P_k\right\}$.
For $n\in\Z$, $X\in\left\{L,a\right\}$, and $g\in\textbf{T}(\boldsymbol{\alpha},\boldsymbol{\beta},\boldsymbol{\gamma},\boldsymbol{\lambda},\boldsymbol{g})$, define a subspace
\begin{equation}
	\label{sub}
	N(X,g)=\mathrm{span}_\C\left\{g,X_ng\mid n\in\Z\right\}.
\end{equation}
Recall that for any $n\ge 1$, the \textbf{lexicographical order} on $\N^n$ is defined as
\begin{equation*}
	(a_1,\ldots,a_n)\succ(b_1,\ldots,b_n)\Leftrightarrow \exists\; 1\le k\le n \quad\mathrm{s.t.}\quad a_1=b_1,\ldots,a_{k-1}=b_{k-1}\quad \mathrm{and}\quad a_k>b_k.
\end{equation*}
Define the \textbf{degree} of $g$ to be the maximal element $ (\textbf{p},\textbf{q})\in E\subset\N^{2m}$, and denote it by $\deg(g)$.
For zero element, we assume its degree is infinitesimal.
For $1\le k\le m$, denote by $e_k=(\delta_{1k},\delta_{2k},\ldots,\delta_{mk})\in\N^m$.

\subsection{Simplicity}
The aim of this subsection is to give a necessary and sufficient condition for $\textbf{T}(\boldsymbol{\alpha},\boldsymbol{\beta},\boldsymbol{\gamma},\boldsymbol{\lambda},\boldsymbol{g})$ to be simple.
A lemma is presented below, whose proof is parallel to that of Proposition 3.2 in \cite{CY}.
\begin{lemma}
	Suppose $\lambda_1,\ldots,\lambda_m$ are pairwise distinct non-zero complex numbers. 
	Then for $n\in\Z$, $1\le k\le m$, and $$0\ne g=\sum_{(\textbf{p},\textbf{q})\in E}g_{(\textbf{p},\textbf{q})} \textbf{s}^{\textbf{p}}\textbf{t}^{\textbf{q}}\in\textbf{T}(\boldsymbol{\alpha},\boldsymbol{\beta},\boldsymbol{\gamma},\boldsymbol{\lambda},\boldsymbol{g}),$$ the following statements are valid.
	\begin{align}
		\label{nnn}
		\sum_{(\textbf{p},\textbf{q})\in E}g_{(\textbf{p},\textbf{q})} \textbf{s}^{\textbf{p}+e_k}\textbf{t}^{\textbf{q}}&\in N(L,g),\\
		\label{nn}
		\sum_{(\textbf{p},\textbf{q})\in E} g_{(\textbf{p},\textbf{q})}\textbf{s}^{\textbf{p}}\textbf{t}^{\textbf{q}+e_k}&\in N(a,g),\\
		\label{n}
		\sum_{(\textbf{p},\textbf{q})\in E_k} g_{(\textbf{p},\textbf{q})}\textbf{s}^{\textbf{p}-P_k e_k}\textbf{t}^{\textbf{q}+e_k}&\in N(a,g).
	\end{align}
\end{lemma}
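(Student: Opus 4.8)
The plan is to extract, from the action formulas \eqref{ac} applied coordinate-wise to a tensor product, enough Vandermonde-type information to peel off leading coefficients. The key structural fact is that $L_n$ and $a_n$ act on $\textbf{T}(\boldsymbol{\alpha},\boldsymbol{\beta},\boldsymbol{\gamma},\boldsymbol{\lambda},\boldsymbol{g})$ by $L_n = \sum_{k=1}^m \lambda_k^n\,(s_k+n\alpha_k)\,\tau_k^{-n}$ and $a_n = \sum_{k=1}^m \lambda_k^n\, t_k\,\tau_k^{-n}$, where $\tau_k$ denotes the shift $s_k\mapsto s_k-1$ on the $k$-th factor. So for a fixed monomial $\textbf{s}^{\textbf{p}}\textbf{t}^{\textbf{q}}$, applying $a_n$ produces $\sum_k \lambda_k^n\,\textbf{s}^{(\textbf{p},n)}_k\,\textbf{t}^{\textbf{q}+e_k}$, where $\textbf{s}^{(\textbf{p},n)}_k$ is the result of expanding $\prod_j(s_j-n\delta_{jk})^{p_j}$ — a polynomial in $n$ of degree $p_k$ in the $k$-th slot and degree $0$ elsewhere. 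The upshot is that $a_n g$, as $n$ ranges over $\Z$, lives in the finite-dimensional space spanned by $\{\textbf{s}^{\textbf{p}'}\textbf{t}^{\textbf{q}+e_k} : p'_j\le p_j,\ k\}$, and the "coefficient vector" of $a_n g$ in this basis is a fixed matrix times the vector $(n^\ell \lambda_k^n)_{k,\ell}$.

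First I would set up this bookkeeping precisely: collect all monomials appearing in $\{g\}\cup\{a_n g : n\in\Z\}$ into a finite list, write $a_n g = \sum_{(k,\ell)} n^\ell \lambda_k^n\, v_{k,\ell}$ for fixed vectors $v_{k,\ell}$ in the span of those monomials, with the top vector $v_{k,P_k}$ (coming from $\ell=P_k$) being, up to a nonzero scalar depending on $P_k$, exactly $(-1)^{P_k}\sum_{(\textbf{p},\textbf{q})\in E_k} g_{(\textbf{p},\textbf{q})}\textbf{s}^{\textbf{p}-P_ke_k}\textbf{t}^{\textbf{q}+e_k}$ — which is the vector in \eqref{n}. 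Then, choosing enough values $n=r+1,\dots,r+s$ (with $s$ the total number of $(k,\ell)$ pairs that actually occur, grouped by $k$ in the pattern of Lemma~\ref{crucial}), the passage "$(a_n g)_n \rightsquigarrow$ each $v_{k,\ell}$" is governed by the matrix $\mathfrak{R}$ of Lemma~\ref{crucial}, whose determinant is nonzero precisely because the $\lambda_k$ are pairwise distinct. Hence every $v_{k,\ell}$, and in particular $v_{k,P_k}$, lies in $N(a,g)=\mathrm{span}_\C\{g, a_n g\mid n\in\Z\}$; rescaling gives \eqref{n}. Taking $\ell=0$ (i.e. the $n$-independent part, most cleanly obtained by also using $g$ itself to cancel) yields $\sum_{(\textbf{p},\textbf{q})\in E} g_{(\textbf{p},\textbf{q})}\textbf{s}^{\textbf{p}}\textbf{t}^{\textbf{q}+e_k}$, which is \eqref{nn}.

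For \eqref{nnn} the argument is identical with $a_n$ replaced by $L_n$: here $L_n g = \sum_k \lambda_k^n\big(\textbf{s}^{(\textbf{p},n)}_k\cdot s_k + n\alpha_k\,\textbf{s}^{(\textbf{p},n)}_k\big)\textbf{t}^{\textbf{q}}$ shifted, so the monomials now carry an extra $s_k$ but no extra $t_k$, and the same Lemma~\ref{crucial} extraction isolates the component $\sum_{(\textbf{p},\textbf{q})\in E} g_{(\textbf{p},\textbf{q})}\textbf{s}^{\textbf{p}+e_k}\textbf{t}^{\textbf{q}}$. I would remark that the degree in $n$ contributed by the $k$-th factor is now $P_k+1$ rather than $P_k$ because of the $n\alpha_k$ term and the extra multiplication by $s_k$, but this only changes the block sizes $s_k$ fed into Lemma~\ref{crucial}, not the nonvanishing of the determinant.

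The main obstacle is purely organizational rather than conceptual: one must set up the indexing so that the exponents-of-$n$ appearing for a given $\lambda_k$ form a contiguous block $0,1,\dots,s_k-1$ as required by the statement of Lemma~\ref{crucial}, and verify that the "leading" monomials one wants to isolate genuinely correspond to the top power of $n$ in each block (so that the extracted vector is the claimed sum over $E_k$, not some contamination by lower-order monomials). Once the matrix is identified as exactly the $\mathfrak{R}$ of Lemma~\ref{crucial}, its invertibility is immediate from pairwise distinctness of the $\lambda_k$, and the three claims follow by reading off the appropriate rows of $\mathfrak{R}^{-1}$ applied to the vector $(g, L_{r+1}g,\dots)$ or $(g, a_{r+1}g,\dots)$. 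I would therefore spend most of the write-up on the first bookkeeping paragraph and then invoke Lemma~\ref{crucial} as a black box, exactly as the parallel Proposition~3.2 of \cite{CY} does.
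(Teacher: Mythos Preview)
Your proposal is correct and is exactly the argument the paper intends: the paper gives no proof of this lemma, deferring entirely to Proposition~3.2 of \cite{CY}, and your expansion $X_n g = \sum_{k,\ell} n^\ell\lambda_k^n\,v_{k,\ell}$ followed by inverting the matrix of Lemma~\ref{crucial} (nonsingular because the $\lambda_k$ are pairwise distinct) to isolate each $v_{k,\ell}$ is precisely that parallel argument. Two cosmetic points: with your convention $\tau_k:s_k\mapsto s_k-1$ the shift in the action formulas should read $\tau_k^{\,n}$ rather than $\tau_k^{-n}$, and the aside about ``using $g$ itself to cancel'' for the $\ell=0$ component is unnecessary, since $v_{k,0}$ already drops out directly from the matrix inversion.
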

An immediate consequence of \eqref{nnn} and $\eqref{nn}$ is the following lemma.
\begin{lemma}
	\label{ggg}
	Suppose $\lambda_1,\ldots,\lambda_m$ are pairwise distinct non-zero complex numbers. 
	Then the module $\textbf{T}(\boldsymbol{\alpha},\boldsymbol{\beta},\boldsymbol{\gamma},\boldsymbol{\lambda},\boldsymbol{g})$ is generated by $1^{\otimes m}$.
\end{lemma}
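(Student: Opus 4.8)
The plan is to show that the cyclic submodule $M:=U(L)\cdot 1^{\otimes m}$ coincides with all of $\textbf{T}(\boldsymbol{\alpha},\boldsymbol{\beta},\boldsymbol{\gamma},\boldsymbol{\lambda},\boldsymbol{g})$. Since the underlying space is $\bigotimes_{k=1}^{m}\C[s_k,t_k]$ with monomial basis $\{\textbf{s}^{\textbf{p}}\textbf{t}^{\textbf{q}}\mid(\textbf{p},\textbf{q})\in\N^{2m}\}$, it suffices to prove that every such monomial lies in $M$. The two relations \eqref{nnn} and \eqref{nn} from the preceding lemma are exactly the raising operators I need: applied to an element $g$ they produce, inside $N(L,g)\subseteq U(L)g$ respectively $N(a,g)\subseteq U(L)g$, the element obtained by multiplying every monomial of $g$ by $s_k$, respectively by $t_k$. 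The crucial observation is that when $g$ is a single monomial these operations again return a single monomial, so one can climb the monomial lattice one box at a time.

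First I would generate all pure $s$-monomials. Writing $1^{\otimes m}=\textbf{s}^{\textbf{0}}\textbf{t}^{\textbf{0}}$ with $\textbf{0}=(0,\ldots,0)$, I argue by induction on $|\textbf{p}|=p_1+\cdots+p_m$ that $\textbf{s}^{\textbf{p}}\in M$ for every $\textbf{p}\in\N^m$. The base case $\textbf{p}=\textbf{0}$ is the generator itself. For the inductive step, if $\textbf{s}^{\textbf{p}}\in M$ then applying \eqref{nnn} to the single-monomial element $g=\textbf{s}^{\textbf{p}}$ (with $\textbf{q}=\textbf{0}$ and $E=\{(\textbf{p},\textbf{0})\}$) shows $\textbf{s}^{\textbf{p}+e_k}\in N(L,\textbf{s}^{\textbf{p}})\subseteq U(L)\textbf{s}^{\textbf{p}}\subseteq M$ for each $1\le k\le m$. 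Since every vector of length $|\textbf{p}|+1$ is of the form $\textbf{p}+e_k$ for some $\textbf{p}$ of length $|\textbf{p}|$, the induction closes.

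Next, for each fixed $\textbf{p}$ I would climb in the $t$-variables by the same scheme, using \eqref{nn} in place of \eqref{nnn}. Fixing $\textbf{p}$ and inducting on $|\textbf{q}|$, the base case $\textbf{q}=\textbf{0}$ is supplied by the previous paragraph, and the inductive step applies \eqref{nn} to the single monomial $g=\textbf{s}^{\textbf{p}}\textbf{t}^{\textbf{q}}$ to conclude $\textbf{s}^{\textbf{p}}\textbf{t}^{\textbf{q}+e_k}\in N(a,\textbf{s}^{\textbf{p}}\textbf{t}^{\textbf{q}})\subseteq M$. Combining the two nested inductions gives $\textbf{s}^{\textbf{p}}\textbf{t}^{\textbf{q}}\in M$ for all $(\textbf{p},\textbf{q})\in\N^{2m}$, hence $M=\textbf{T}(\boldsymbol{\alpha},\boldsymbol{\beta},\boldsymbol{\gamma},\boldsymbol{\lambda},\boldsymbol{g})$, which is the desired conclusion.

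I do not anticipate a genuine obstacle here: the heavy lifting, namely producing the raising relations \eqref{nnn} and \eqref{nn} — where the pairwise distinctness of the $\lambda_k$ is essential — is already carried out in the preceding lemma, and that hypothesis is inherited unchanged. The only point deserving care is the remark that these operators act monomial-by-monomial, so that feeding in a single monomial keeps the output a single monomial; this is precisely what allows the two inductions to proceed box-by-box rather than forcing one to disentangle a genuine linear combination of basis vectors.
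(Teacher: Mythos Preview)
Your proposal is correct and follows essentially the same approach as the paper: the paper states the lemma as ``an immediate consequence of \eqref{nnn} and \eqref{nn}'' with no further argument, and your two nested inductions on $|\textbf{p}|$ and then $|\textbf{q}|$ simply make explicit the evident climbing procedure those two relations afford when fed single monomials.
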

Now, we are in a position to prove the \textbf{main theorem} of this subsection.
\begin{thm}
	\label{simple}
	The $U(L)$-module $\textbf{T}(\boldsymbol{\alpha},\boldsymbol{\beta},\boldsymbol{\gamma},\boldsymbol{\lambda},\boldsymbol{g})$ is simple if and only if $\lambda_1,\ldots,\lambda_m$ are pairwise distinct non-zero complex numbers.
\end{thm}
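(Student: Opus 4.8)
The plan is to prove the two directions separately. For the "only if" direction I would argue by contrapositive: if $\lambda_i=\lambda_j$ for some $i\neq j$, then the module is not simple. The natural candidate for a proper submodule is obtained by exploiting the coincidence of the two scalars. When $\lambda_i=\lambda_j$, the operators $c_{n}$ acting on the $i$-th and $j$-th factors become proportional (up to the constants $-\beta_i$ and $-\beta_j$), so the combination $\beta_j c_n^{(i)} - \beta_i c_n^{(j)}$ annihilates $1^{\otimes m}$ and more generally one can build, by mimicking the construction in \cite{CY}, a nonzero element on which the whole subalgebra generated by the $c_n$'s acts trivially, producing a proper nonzero submodule. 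Concretely I would check that the cyclic submodule generated by a suitable element (for instance one supported on a single monomial in the $s_i, s_j$ variables) is proper by a degree/support argument in the lexicographic order introduced before the theorem. This direction is essentially a bookkeeping adaptation of the $\mathfrak{sl}_2$ case treated in \cite{CY}.

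For the "if" direction, assume $\lambda_1,\ldots,\lambda_m$ are pairwise distinct and nonzero, and let $N$ be a nonzero submodule; I must show $N=\textbf{T}(\boldsymbol{\alpha},\boldsymbol{\beta},\boldsymbol{\gamma},\boldsymbol{\lambda},\boldsymbol{g})$. By Lemma \ref{ggg} it suffices to show $1^{\otimes m}\in N$. Pick $0\neq g\in N$ of minimal degree with respect to the lexicographic order on $\N^{2m}$. The strategy is to use the three inclusions \eqref{nnn}, \eqref{nn}, \eqref{n} to perform "degree reduction": relation \eqref{n} lets me strip the top power $P_k$ of $s_k$ off the leading term while only raising the (later-ordered) $t_k$-degree, and relations \eqref{nnn}, \eqref{nn} let me move multiplication by $s_k$ and $t_k$ inside $N$. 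Iterating, I reduce to an element of $N$ that is a nonzero polynomial in the $t_k$'s alone (all $s_k$-degrees zero). Then I still have to kill the $t_k$-dependence: for this I would apply the operators $b_n$, whose action on $\C[t_k]$ involves $g_k(t_k)+\beta_k\partial_{t_k}$, together with $c_n$ (which acts as the scalar $-\lambda_k^n\beta_k$ on the $k$-th factor). The key point here is that, since the $\lambda_k$ are pairwise distinct, the operators $c_n^{(k)}$ for varying $n$ are "linearly independent across factors"; invoking the Vandermonde-type nonvanishing of Lemma \ref{crucial} (with all $s_k=1$), I can extract from $\{c_n g : n\in\Z\}$ the individual tensor components, hence project onto any single factor, and then reduce the $t_k$-degree one factor at a time via the $\partial_{t_k}$ part of $b_n$ exactly as in part (1) of the Proposition preceding Theorem \ref{simple}. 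This lands $1^{\otimes m}$ in $N$.

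I expect the main obstacle to be the degree-reduction bookkeeping in the "if" direction: one has to choose the order of operations carefully so that each application of \eqref{nnn}, \eqref{nn}, \eqref{n}, $b_n$, or $c_n$ strictly decreases the chosen monomial order (or at least a well-founded refinement of it), and one must make sure the relevant leading coefficients do not vanish — which is precisely where the pairwise-distinctness of the $\lambda_k$ enters through Lemma \ref{crucial}. A secondary subtlety is that $\deg(g_k)$ may be $0$ (so $g_k$ is a constant and the $b_n$-action is purely $\beta_k\partial_{t_k}$), or large; the argument should be uniform in $\deg(g_k)$, so I would phrase the $t$-degree reduction using only the $\partial_{t_k}$ term of $b_n$ after subtracting off the $g_k(a_0)$-contribution, as is already done in the proof that $\Omega(\alpha,\beta,\gamma,\lambda,g)$ is simple.

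Once both directions are established, the theorem follows. I would remark that the "if" direction also re-proves simplicity of each individual $\Omega(\alpha_k,\beta_k,\gamma_k,\lambda_k,g_k)$ as the case $m=1$, consistent with part (1) of the earlier Proposition.
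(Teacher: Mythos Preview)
Your ``if'' direction is essentially the paper's argument: pick $g\in N$ of minimal degree, use \eqref{n} to force $\mathbf{p}'=\mathbf{0}$, then use $b_n$ together with a Vandermonde separation and \eqref{nn} to force $\mathbf{q}'=\mathbf{0}$, and conclude via Lemma~\ref{ggg}. One slip to fix: once $g\in\mathbf{N}=\C[t_1]\otimes\cdots\otimes\C[t_m]$ has no $s$-dependence, $c_n g=\bigl(-\sum_k\beta_k\lambda_k^n\bigr)g$ is merely a scalar multiple of $g$, so the family $\{c_ng:n\in\Z\}$ cannot separate the tensor factors. The Vandermonde separation has to be applied to $\{b_ng\}$ instead: writing $b_ng=\sum_{k=1}^m\lambda_k^n h_k$ with $h_k$ independent of $n$, distinctness of the $\lambda_k$ gives each $h_k\in N$; then \eqref{nn} (iterated, to multiply the $k$-th slot by $g_k(t_k)$) lets you subtract the multiplicative piece and isolate $\beta_k\partial_{t_k}g\in N$. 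Your last paragraph already anticipates this (``subtracting off the $g_k(a_0)$-contribution''), so the correction is minor.

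Your ``only if'' direction is vaguer than the paper's and, as written, not quite well-formed: $c_n$ is a single operator on the tensor product, so an expression like $\beta_j c_n^{(i)}-\beta_i c_n^{(j)}$ has no obvious meaning, and the ``cyclic submodule generated by a suitable element'' is never specified. The paper bypasses all of this by exhibiting a proper submodule directly: assuming $\lambda_1=\lambda_2$ and identifying $\C[s_1,t_1]\otimes\C[s_2,t_2]\cong\C[s_1,s_2,t_1,t_2]$, one checks by hand that
\[
W=\mathrm{span}_\C\bigl\{\,\C[t_1,t_2](s_1+s_2)^p\ \big|\ p\in\N\,\bigr\}
\]
(tensored with the remaining $m-2$ factors) is $L$-stable and proper. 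This is the concrete realization of your intuition; replacing your sketch with this explicit $W$ closes the argument.
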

\begin{proof}
	Assume that $\lambda_1,\ldots,\lambda_m$ are pairwise distinct non-zero complex numbers.
	Suppose $N$ is a non-zero submodule of $\textbf{T}(\boldsymbol{\alpha},\boldsymbol{\beta},\boldsymbol{\gamma},\boldsymbol{\lambda},\boldsymbol{g})$, and take $$0\ne g=\sum_{(\textbf{p},\textbf{q})\in E}g_{(\textbf{p},\textbf{q})} \textbf{s}^{\textbf{p}}\textbf{t}^{\textbf{q}}\in N$$ with the minimal degree.
	\begin{claim}
		$\deg(g)=\mathbf{0}.$
	\end{claim}
	Assume that $\deg(g)=(\textbf{p}',\textbf{q}')\ne \mathbf{0}$, where $\textbf{p}'=(p'_1,\ldots,p'_m)$ and $\textbf{q}'=(q'_1,\ldots,q'_m)$. 
	
	If $\textbf{p}'\ne\mathbf{0}$, let $k_0=\min\left \{ k\mid 1\le k\le m,p'_k>0 \right \}$, then from \eqref{n}, we know that
	\begin{equation*}
		0\ne\sum_{(\textbf{p},\textbf{q})\in E_{k_0}}g_{(\textbf{p},\textbf{q})} \textbf{s}^{\textbf{p}-P_{k_0} e_{k_0}}\textbf{t}^{\textbf{q}+e_{k_0}}\in N,   
	\end{equation*}
	whose degree is lower than that of $g$. It follows that $\textbf{p}'=\mathbf{0}$ and $g=\sum_{(\textbf{0},\textbf{q})\in E} g_{(\mathbf{0},\textbf{q})}\textbf{t}^{\textbf{q}}$.
	 
	The action of $b_n$ implies
	\begin{align*}
		b_ng&=\sum_{(\textbf{0},\textbf{q})\in E}\sum_{k=1}^{m}\lambda_k^n g_{(\textbf{0},\textbf{q})}t_1^{q_1}\otimes\cdots\otimes\big(g_k(t_k)t_k^{q_k}+\beta_kq_kt_k^{q_k-1}\big)\otimes\cdots\otimes s_m^{p_m}t_m^{q_m}\in N.
	\end{align*}
	Using the Vandermonde determinant and \eqref{nn}, we know that $\partial_{t_k}g\in N$ for $1\le k\le m$. If $\textbf{q}'\ne\mathbf{0}$, let $k_1=\min\left \{ k\mid 1\le k \le m,q'_k>0 \right \} $, then we see that $0\ne\partial_{t_{k_1}} g\in N$, whose degree is lower than $\deg(g)$. 
	Hence we have $\deg(g)=\mathbf{0}$.
	From Lemma \ref{ggg}, we conclude that $N=\textbf{T}(\boldsymbol{\alpha},\boldsymbol{\beta},\boldsymbol{\gamma},\boldsymbol{\lambda},\boldsymbol{g})$. 
	 
	Conversely, if $\lambda_1,\ldots,\lambda_m$ are not pairwise distinct non-zero complex numbers, without loss of generality, assume that $\lambda_1=\lambda_2$.
	We identify $\C[s_1,t_1]\otimes\C[s_2,t_2]$ with $\C[s_1,s_2,t_1,t_2]$.
	Let $W=\mathrm{span}_\C\left \{ \C[t_1,t_2](s_1+s_2)^p\mid p\in\N \right \} $.
	Direct computations show that $W$ is a proper submodule.
\end{proof}
\subsection{Isomorphism classes}
Subsequently, we determine the isomorphism classes of these simple tensor product modules.
Throughout this subsection, we always assume $\lambda_1,\ldots,\lambda_m\in\C^*$ are pairwise distinct due to Theorem \ref{simple}.

For $g\in\textbf{T}(\boldsymbol{\alpha},\boldsymbol{\beta},\boldsymbol{\gamma},\boldsymbol{\lambda},\boldsymbol{g})$, define
\begin{equation*}
	R_g=\dim\mathrm{span}_\C\left\{ g,a_ng,c_ng\mid n\in\Z\right\} 
\end{equation*}
and
\begin{equation*}
	R_{\textbf{T}}=\inf\left \{ R_g\mid g\ne0 \right \}.
\end{equation*}
Let $\mathbf{N}=\C[t_1]\otimes\cdots\otimes\C[t_m]\subset\textbf{T}(\boldsymbol{\alpha},\boldsymbol{\beta},\boldsymbol{\gamma},\boldsymbol{\lambda},\boldsymbol{g})$.
Then the following proposition holds.
\begin{prop}
	\label{rank}
	For any $0\ne g\in\textbf{T}(\boldsymbol{\alpha},\boldsymbol{\beta},\boldsymbol{\gamma},\boldsymbol{\lambda},\boldsymbol{g})$, we have $R_g\ge m+1$, where the equality holds if and only if $0\ne g\in \mathbf{N}$.
\end{prop}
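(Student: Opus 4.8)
The plan is to compute, for an arbitrary nonzero $g$, the action of $a_n$ and $c_n$ on $g$ explicitly using the formulas \eqref{ac} applied tensor-factor-wise, and to read off the dimension of $\mathrm{span}_\C\{g, a_ng, c_ng\mid n\in\Z\}$ from the $\lambda_k^n$ dependence. Write $g=\sum_{(\textbf{p},\textbf{q})\in E}g_{(\textbf{p},\textbf{q})}\textbf{s}^{\textbf{p}}\textbf{t}^{\textbf{q}}$. Since $c_n$ acts on the $k$-th factor by $-\lambda_k^n\beta_k f(s_k-n,t_k)$ and $a_n$ by $\lambda_k^n t_k f(s_k-n,t_k)$, the vectors $a_ng$ and $c_ng$ are $\C$-linear combinations of the $m$ "shifted" vectors $u_k := \sum_{(\textbf{p},\textbf{q})\in E} g_{(\textbf{p},\textbf{q})} \textbf{s}^{\textbf{p}}\textbf{t}^{\textbf{q}}$ with $s_k$ replaced by $s_k-n$ inside factor $k$ only — more precisely, expanding $(s_k-n)^{p_k}$ in powers of $n$, one sees that $\{g, a_ng, c_ng \mid n\in\Z\}$ spans a subspace of the finite-dimensional space $S$ spanned by $g$ together with all monomials obtained from the monomials of $g$ by lowering one $s_k$-exponent and (for the $a_n$ part) possibly raising the corresponding $t_k$-exponent by $1$. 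So $R_g$ is finite, and the task is a lower bound.

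Next I would establish $R_g\ge m+1$. First, $c_0 g = -(\sum_k \beta_k)\, g \ne 0$ is a multiple of $g$ itself, so $g$ always lies in the span; this accounts for one dimension. Then, for each $k=1,\dots,m$, consider the "leading in $t_k$" effect of $a_n$: the component of $a_ng$ in which the $t_k$-exponent is one more than its value in $g$ (coming from the multiplication by $t_k$) is $\lambda_k^n\cdot(\text{shifted } g)$. I would isolate these $m$ families of vectors, indexed by $k$, and argue via Lemma \ref{crucial} (the generalized Vandermonde determinant with the $\lambda_k$ pairwise distinct — here all $s_j=1$, so the determinant is $\prod_{i<j}(\lambda_j-\lambda_i)\ne0$) that, as $n$ ranges over $m$ values, the corresponding vectors are linearly independent among themselves and independent of $g$ (which carries no $t_k$-raising). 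Distinctness of the $\lambda_k$ is exactly what prevents the contributions of different tensor factors from collapsing, and combining the $m$ "raise-$t_k$" directions with $g$ gives $R_g\ge m+1$.

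For the equality characterization: if $0\ne g\in\mathbf{N}=\C[t_1]\otimes\cdots\otimes\C[t_m]$ then every $s_k$-exponent is $0$, so the $a_n$- and $c_n$-actions reduce to $a_ng=\sum_k\lambda_k^n(g_k(t_k)+\text{raise }t_k)\cdot(\cdots)$ with no $n$-dependence beyond the single power $\lambda_k^n$ per factor; a direct count shows $\mathrm{span}_\C\{g,a_ng,c_ng\mid n\in\Z\}$ has dimension exactly $m+1$ (the vector $g$ plus one vector per $k$ recording the $t_k\mapsto t_k g_k(t_k)+\beta_k\partial_{t_k}$ shift — here one must check these $m$ shift-vectors together with $g$ are independent, using again that the $\lambda_k$ are distinct, and that no further vectors arise). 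Conversely, if some monomial of $g$ has a positive $s_{k}$-exponent, pick the largest such and apply the expansion of $(s_k-n)^{p_k}$: the top-degree-in-$n$ term produces an extra vector — roughly, $\partial_{s_k}$-type derivative of $g$ times $t_k$ — that is not in the span of the $m+1$ vectors found above, again by a Vandermonde/degree argument on the powers of $n$, forcing $R_g\ge m+2$. The main obstacle I anticipate is the bookkeeping in this last step: one must carefully separate, inside the single family $\{a_ng\mid n\in\Z\}$, the contributions coming from the multiplication-by-$t_k$ part versus the $(s_k-n)^{p_k}$-expansion part across all $m$ factors simultaneously, and show the resulting vectors are genuinely independent; Lemma \ref{crucial} with blocks $s_k>1$ is the right tool, but matching its index conventions to the monomial structure of $g$ will require care.
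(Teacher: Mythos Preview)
Your overall strategy matches the paper's: expand $a_ng$ and $c_ng$ via the binomial formula for $(s_k-n)^{p_k}$, invoke Lemma~\ref{crucial} on the pairwise distinct $\lambda_k$ to extract individual ``coefficient vectors'' from the span, and then run a case split on whether $\deg(g)$ has any positive $s$-part. The paper makes this precise by defining, for each $k$ and each $0\le z\le P_k$, vectors
\[
a_{zk}=\sum_{(\textbf{p},\textbf{q})\in E}g_{(\textbf{p},\textbf{q})}\,s_1^{p_1}t_1^{q_1}\otimes\cdots\otimes\binom{p_k}{z}s_k^{p_k-z}t_k^{q_k+1}\otimes\cdots,\qquad
c_{zk}=\sum_{(\textbf{p},\textbf{q})\in E}g_{(\textbf{p},\textbf{q})}\,s_1^{p_1}t_1^{q_1}\otimes\cdots\otimes\binom{p_k}{z}s_k^{p_k-z}t_k^{q_k}\otimes\cdots,
\]
and proving the \emph{equality} $\mathrm{span}_\C\{g,a_ng,c_ng\mid n\in\Z\}=\mathrm{span}_\C\{g,a_{zk},c_{zk}\}$. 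This two-sided containment is what you are missing for the upper bound in the $g\in\mathbf{N}$ case: once you know the span is exactly $\mathrm{span}_\C\{g,a_{0k},c_{0k}\}$ and observe $c_{0k}=g$, the dimension is visibly $\le m+1$, and the degrees $\deg(a_{0k})=(\textbf{p}',\textbf{q}'+e_k)$ give $\ge m+1$. Your ``direct count'' gestures at this but never pins down the generating set.

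There is also a factual slip in your description of the $a_n$-action: you write that for $g\in\mathbf{N}$ the $k$-th contribution records ``the $t_k\mapsto t_kg_k(t_k)+\beta_k\partial_{t_k}$ shift''. That is the $b_n$-action, not $a_n$; from \eqref{ac} one has simply $a_nf(s_k,t_k)=\lambda_k^n t_kf(s_k-n,t_k)$, so on $\mathbf{N}$ the $k$-th piece of $a_ng$ is just multiplication by $t_k$ in the $k$-th slot. This actually makes your argument easier, not harder, but you should have the formulas straight. Finally, for the strict inequality when $\textbf{p}'\ne\mathbf{0}$, the paper gets the extra independent vector from the $c$-side: with $k_2$ the first index where $p'_{k_2}>0$, one has $\deg(c_{P_{k_2}k_2})\prec\deg(g)$, so $\{g,a_{01},\dots,a_{0m},c_{P_{k_2}k_2}\}$ are $m+2$ vectors with pairwise distinct degrees. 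Your proposal to extract this from the $a$-side instead would also work, but the $c$-side is cleaner because $c_{P_{k_2}k_2}$ drops degree without altering any $t$-exponent.
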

\begin{proof}
	For a non-zero
	$$g=\sum_{(\textbf{p},\textbf{q})\in E}g_{(\textbf{p},\textbf{q})} \textbf{s}^{\textbf{p}}\textbf{t}^{\textbf{q}}\in\textbf{T}(\boldsymbol{\alpha},\boldsymbol{\beta},\boldsymbol{\gamma},\boldsymbol{\lambda},\boldsymbol{g}),$$
	by the action of $a_n$, we have
	\begin{equation*}
		\begin{split}
			a_n g
			=& \sum_{(\textbf{p},\textbf{q})\in E}\sum_{k = 1}^{m} g_{(\textbf{p},\textbf{q})}s_1^{p_1}t_1^{q_1}\otimes\cdots\otimes \lambda_k^n(s_k-n)^{p_k}t_k^{q_k+1}\otimes \cdots \otimes s_m^{p_m}t_m^{q_m}\\
			=& \sum_{(\textbf{p},\textbf{q})\in E}\sum_{k = 1}^{m}\sum_{x=0}^{p_k}(-1)^xn^x\lambda_k^n \left(g_{(\textbf{p},\textbf{q})} s_1^{p_1}t_1^{q_1}\otimes\cdots\otimes \begin{pmatrix}
				p_k \\
				x
			\end{pmatrix} s_k^{p_k-x}t_k^{q_k+1}\otimes \cdots \otimes s_m^{p_m}t_m^{q_m}\right).
		\end{split}
	\end{equation*}
	Thus, using Lemma \ref{crucial} we deduce that 
	\begin{equation*}
		a_{xk}=\sum_{(\textbf{p},\textbf{q})\in E}g_{(\textbf{p},\textbf{q})}s_1^{p_1}t_1^{q_1}\otimes\cdots\otimes \begin{pmatrix}
			p_k \\
			x
		\end{pmatrix} s_k^{p_k-x}t_k^{q_k+1}\otimes \cdots \otimes s_m^{p_m}t_m^{q_m}\in \mathrm{span}_\C\left \{ g,a_ng,c_ng\mid n\in\Z\right\},
	\end{equation*}
	where $0\le x\le P_k$ and $1\le k\le m$.
	
	The action of $c_n$ implies
	\begin{align*}
		c_ng&=-\sum_{(\textbf{p},\textbf{q})\in E}\sum_{k=1}^{m} g_{(\textbf{p},\textbf{q})}s_1^{p_1}t_1^{q_1}\otimes\cdots\otimes \lambda_k^n\beta_k(s_k-n)^{p_k}t_k^{q_k}\otimes\cdots\otimes s_m^{p_m}t_m^{q_m}\\
		&=-\sum_{(\textbf{p},\textbf{q})\in E}\sum_{k=1}^{m}\sum_{z=0}^{p_k}(-1)^z n^z\lambda_k^n\beta_k \left(g_{(\textbf{p},\textbf{q})}s_1^{p_1}t_1^{q_1}\otimes\cdots\otimes \begin{pmatrix}
			p_k\\
			z
		\end{pmatrix}s_k^{p_k-z}t_k^{q_k}\otimes\cdots\otimes s_m^{p_m}t_m^{q_m}\right).
	\end{align*}
	Lemma \ref{crucial} indicates
	\begin{equation*}
		c_{zk}=\sum_{(\textbf{p},\textbf{q})\in E}g_{(\textbf{p},\textbf{q})}s_1^{p_1}t_1^{q_1}\otimes\cdots\otimes \begin{pmatrix}
			p_k\\
			z
		\end{pmatrix}s_k^{p_k-z}t_k^{q_k}\otimes\cdots\otimes s_m^{p_m}t_m^{q_m}\in\mathrm{span}_\C\left\{ g,a_ng,c_ng\mid n\in\Z\right\},
	\end{equation*}
	where $0\le z\le P_k$ and $1\le k\le m$. 
	It follows that
		$$\mathrm{span}_{\C}\left \{ g,a_ng,c_ng\mid n\in\Z\right \}=\mathrm{span}_\C\left \{ g,a_{zk},c_{zk}\mid 1\le k\le m,0\le z\le P_k\right\}.$$
		
	Assume that $\deg(g)=(\textbf{p}',\textbf{q}')$. Then, for $1\le k\le m$, we have $\deg(a_{0k})=(\textbf{p}',\textbf{q}'+e_k)$.
	\setcounter{case}{0}
	\begin{case}
		$\textbf{p}'=\mathbf{0}$.
		
		In this case, we have $\textbf{p}=\mathbf{0}$ for all $(\textbf{p},\textbf{q})\in E$, and thus $P_k=0$ for $1\le k\le m$. In combination with the fact that $c_{0k}=g$, we infer that $$R_g=\dim\mathrm{span}_\C\left \{ g,a_{0k},c_{0k}\mid 1\le k\le m\right \}=m+1.$$ 
	\end{case}
	\begin{case}
		$\textbf{p}'\ne\mathbf{0}$.
		
		Let $k_2=\min\left \{ k\mid 1\le k\le m,p'_k>0 \right \} $. Then we have $\deg(c_{P_{k_2}k_2})\prec\deg(g)$ and hence
		$$\dim\mathrm{span}_{\C}\left\{ g,a_{0k},c_{P_{k_2}k_2}\mid 1\le k\le m\right\}=m+2,$$
		which implies $R_g>m+1$.\qedhere
	\end{case}
\end{proof}

Suppose
$$\textbf{T}(\boldsymbol{\xi},\boldsymbol{\epsilon},\boldsymbol{\omega},\boldsymbol{\mu},\boldsymbol{h})=\bigotimes_{k=1}^{r}\Omega(\xi_k,\epsilon_k,\omega_k,\mu_k,h_k)$$
is another simple $U(L)$-module.
Now, we prove the \textbf{main theorem} of this subsection.
\begin{thm}
	The simple tensor product modules
	$$\textbf{T}(\boldsymbol{\alpha},\boldsymbol{\beta},\boldsymbol{\gamma},\boldsymbol{\lambda},\boldsymbol{g})\cong \textbf{T}(\boldsymbol{\xi},\boldsymbol{\epsilon},\boldsymbol{\omega},\boldsymbol{\mu},\boldsymbol{h})$$
	if and only if $m=r$ and $$(\alpha_k,\beta_k,\gamma_k,\lambda_k,g_k)=(\xi_k,\epsilon_k,\omega_k,\mu_k,h_k)\quad\mbox{for all  } 1\le k\le m$$ up to a permutation.
\end{thm}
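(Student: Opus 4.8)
The plan is to prove the substantive direction: an isomorphism $\phi\colon\textbf{T}(\boldsymbol{\alpha},\boldsymbol{\beta},\boldsymbol{\gamma},\boldsymbol{\lambda},\boldsymbol{g})\to\textbf{T}(\boldsymbol{\xi},\boldsymbol{\epsilon},\boldsymbol{\omega},\boldsymbol{\mu},\boldsymbol{h})$ forces $m=r$ and, after reindexing the factors of the target, $(\alpha_k,\beta_k,\gamma_k,\lambda_k,g_k)=(\xi_k,\epsilon_k,\omega_k,\mu_k,h_k)$ for all $k$. The converse is immediate: a permutation of the parameters is realized by the corresponding permutation of tensor factors, and identical parameters give literally the same module.

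The first step is to exploit Proposition \ref{rank}. The integer $R_g$ is visibly preserved by any $U(L)$-isomorphism, and Proposition \ref{rank} gives $R_{\textbf{T}}=m+1$; hence $m=r$, and moreover $\phi$ maps $\mathbf{N}\setminus\{0\}$ onto $\mathbf{N}'\setminus\{0\}$, since these are exactly the non-zero vectors attaining the minimal value of $R$. Next I would note that the loop elements $a_n,b_n,c_n,d_n$ all preserve the subspace $\mathbf{N}=\C[t_1,\ldots,t_m]$ (they act on it without introducing the variables $s_k$), whereas $L_n$ does not. On $\mathbf{N}$ one computes that $c_n$ acts as the scalar $-\sum_{k}\lambda_k^n\beta_k$; intertwining with the $c_n$-action on $\mathbf{N}'$ and using that the functions $n\mapsto\lambda^n$ for pairwise distinct $\lambda$ are linearly independent over $\C$ (together with $\beta_k,\epsilon_k\in\C^*$), I conclude that $\{(\lambda_k,\beta_k)\mid 1\le k\le m\}=\{(\mu_k,\epsilon_k)\mid 1\le k\le m\}$ as multisets, and I relabel the target so that $(\lambda_k,\beta_k)=(\mu_k,\epsilon_k)$ for every $k$. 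Since $a_n$ acts on $\mathbf{N}$ as $\sum_k\lambda_k^n(\text{multiplication by }t_k)$, a Vandermonde inversion over $n=0,\dots,m-1$ shows that $\phi|_{\mathbf{N}}$ is $\C[t_1,\dots,t_m]$-linear; as $\mathbf{N}$ and $\mathbf{N}'$ are free of rank $1$ over this polynomial algebra, $\phi|_{\mathbf{N}}$ is multiplication by a non-zero scalar, so after rescaling $\phi$ we may assume $\phi$ restricts to the identity on $\mathbf{N}$, and in particular $\phi(1^{\otimes m})=1^{\otimes m}$.

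With this normalization the remaining parameters are read off by evaluating $\phi$ at $1^{\otimes m}$. Applying $b_n$ gives $b_n1^{\otimes m}=\sum_k\lambda_k^n\bigl(1\otimes\cdots\otimes g_k(t_k)\otimes\cdots\otimes1\bigr)\in\mathbf{N}$, which $\phi$ fixes; comparing with $b_n1^{\otimes m}$ computed in the target and applying the Vandermonde argument again yields $g_k=h_k$ for all $k$. The same device applied to $d_n1^{\otimes m}$ (which involves $\beta_k^{-1}(t_kg_k(t_k)+\gamma_k)$ and again lies in $\mathbf{N}$) then gives $\gamma_k=\omega_k$. Finally, writing $u_k=1\otimes\cdots\otimes s_k\otimes\cdots\otimes1$, one has $L_n1^{\otimes m}=\sum_k\lambda_k^n u_k+n\bigl(\sum_k\alpha_k\lambda_k^n\bigr)1^{\otimes m}$; comparing $\phi(L_n1^{\otimes m})$ with $L_n\phi(1^{\otimes m})=L_n1^{\otimes m}$ (computed in the target, with $u'_k$ the analogue of $u_k$) produces the identity $\sum_k\lambda_k^n\bigl(\phi(u_k)-u'_k\bigr)=n\sum_k(\xi_k-\alpha_k)\lambda_k^n\,1^{\otimes m}$, valid for all $n\in\Z$. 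Since the $2m$ functions $n\mapsto\lambda_k^n$ and $n\mapsto n\lambda_k^n$ ($1\le k\le m$, the $\lambda_k$ distinct) are linearly independent over $\C$, this forces $\xi_k-\alpha_k=0$ for every $k$ (and, incidentally, $\phi(u_k)=u'_k$), completing the proof.

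The routine ingredients are the linear-independence facts for $\{\lambda_k^n\}$ (and, in the last step, for $\{\lambda_k^n,n\lambda_k^n\}$) and the bookkeeping of the $a_n,b_n,c_n,d_n,L_n$ actions on $\mathbf{N}$ and on the vectors $1^{\otimes m}$ and $u_k$. The step I expect to be the main obstacle is the recovery of the $\alpha_k$: unlike $\beta_k,\lambda_k,g_k,\gamma_k$, the parameter $\alpha_k$ appears only in the sub-leading ($s$-degree-zero) part of $L_n\,1^{\otimes m}$, so isolating it requires both the preliminary normalization $\phi|_{\mathbf{N}}=\mathrm{id}$ and the stronger independence statement involving $n\lambda_k^n$; one must also be careful that the identifications $\phi(\mathbf{N})=\mathbf{N}'$ and the scalar rescaling are in place before these comparisons are made.
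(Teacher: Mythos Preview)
Your proof is correct and shares the paper's overall strategy---use Proposition~\ref{rank} to get $m=r$ and to place $\phi(1^{\otimes m})$ in $\mathbf{N}'$, then read off the parameters one at a time from the actions of the generators via Vandermonde-type independence (Lemma~\ref{crucial})---but the execution is organized differently. The paper applies $L_n$ first, obtaining $\{\lambda_k\}=\{\mu_k\}$ and $\alpha_k=\xi_k$ simultaneously while still carrying the general form $\phi(1^{\otimes m})=\sum g_{\mathbf q}\,\mathbf t^{\mathbf q}$; it then uses $a_n$ and $b_n$ to deduce $g_k=h_k$ and force $\phi(1^{\otimes m})\in\C^*1^{\otimes m}$, leaving $\beta_k=\epsilon_k$ and $\gamma_k=\omega_k$ for the very end. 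You instead open with the observation that $c_n$ acts on $\mathbf N$ by the scalar $-\sum_k\lambda_k^n\beta_k$, which pins down the pairs $(\lambda_k,\beta_k)=(\mu_k,\epsilon_k)$ in one stroke, and then use the $a_n$-action to see that $\phi|_{\mathbf N}$ is $\C[t_1,\dots,t_m]$-linear, hence multiplication by a unit, allowing the normalization $\phi(1^{\otimes m})=1^{\otimes m}$ \emph{before} any further comparison. This early normalization makes the remaining extractions ($b_n$ for $g_k$, $d_n$ for $\gamma_k$, $L_n$ for $\alpha_k$) into clean identities at $1^{\otimes m}$. Both routes rest on the same independence lemma; yours trades a slightly longer set-up for a tidier endgame, and your explicit use of $c_n$ and $d_n$ makes transparent two steps the paper leaves to the reader.
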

\begin{proof}
	That $m=r$ follows from Proposition \ref{rank} directly.
	Suppose
	\[
	\phi:\textbf{T}(\boldsymbol{\alpha},\boldsymbol{\beta},\boldsymbol{\gamma},\boldsymbol{\lambda},\boldsymbol{g})\to \textbf{T}(\boldsymbol{\xi},\boldsymbol{\epsilon},\boldsymbol{\omega},\boldsymbol{\mu},\boldsymbol{h})
	\]
	is an isomorphism of $U(L)$-modules.
	Then, from Proposition \ref{rank}, we have
	\begin{equation}
		\label{abc}
		\phi(1^{\otimes m})=\sum_{(\textbf{0},\textbf{q})\in E}g_{\textbf{q}}t_1^{q_1}\otimes\cdots\otimes t_m^{q_m},
	\end{equation}
	where $g_{\textbf{q}}\in\C^*$.
	The action of $L_n$ implies
	\begin{equation}
		\label{syy}
	\begin{split}
	 &\;\;\sum_{k=1}^{m}\lambda_k^n\phi(1\otimes\cdots\otimes s_k\otimes\cdots\otimes1)+\sum_{k=1}^{m}n\lambda_k^n\alpha_k\phi(1^{\otimes m})\\
	 =&\sum_{(\textbf{0},\textbf{q})\in E}\sum_{k=1}^m\mu_k^ng_{\textbf{q}} \left(t_1^{q_1}\otimes\cdots\otimes s_kt_k^{q_k}\otimes\cdots\otimes t_m^{q_m}\right)
	 +\sum_{(\textbf{0},\textbf{q})\in E}\sum_{k=1}^m n\mu_k^n\xi_k\phi(1^{\otimes m})
	\end{split}
	\end{equation}
	Since $\phi(1\otimes\cdots\otimes s_k\otimes\cdots\otimes1)\ne0$, using Lemma \ref{crucial}, we conclude that $$\left\{\lambda_1,\ldots,\lambda_m\right\}=\left\{\mu_1,\ldots,\mu_m\right\}.$$
	Without loss of generality, assume that $\lambda_k=\mu_k$ for $1\le k\le m$.
	Then, Lemma \ref{crucial} and Equation \eqref{syy} imply $\alpha_k=\xi_k$.
	By Equation \eqref{abc}, the action of $a_n$, and the Vandermonde determinant, the equation
	\[
	\phi(t_1^{r_1}\otimes\cdots\otimes t_m^{r_m})=\sum_{(\textbf{0},\textbf{q})\in E}g_{\textbf{q}}t_1^{r_1+q_1}\otimes\cdots\otimes t_m^{r_m+q_m}.
	\]
	holds for all $r_k\in\N$.
	The action of $b_n$ together with the above equation and the Vandermonde determinant show that the equation
	\[
	\sum_{(\textbf{0},\textbf{q})\in E}g_{\textbf{q}}t_1^{q_1}\otimes\cdots\otimes\left(g_k(t_k)-h_k(t_k)\right)t_k^{q_k}\otimes\cdots\otimes t_m^{q_m}=\sum_{(\textbf{0},\textbf{q})\in E}g_{\textbf{q}}t_1^{q_1}\otimes\epsilon_kq_kt_k^{q_k-1}\otimes\cdots\otimes t_m^{q_m}
	\]
	holds for $1\le k\le m$.
	It follows that $g_k(t_k)=h_k(t_k)$ and $\phi(1^{\otimes m})\in\C^*(1^{\otimes m})$.
	We can now see that $\beta_k=\epsilon_k$ and $\gamma_k=\omega_k$.
	The proof is completed.
\end{proof}
\section*{Acknowledgements}
The author would like to acknowledge the support from National Key R\&D Program of China (2024YFA1013802), NSF of China (11931009, 12101152, 12161141001, 12171132, 12401030, and 12401036), Innovation Program for Quantum Science and Technology (2021ZD0302902), and the Fundamental Research Funds for the Central Universities of China.

\end{document}